\documentclass[10pt, final]{amsart}
\usepackage[english]{babel}
\usepackage{amssymb}
\usepackage{graphicx}
\usepackage{amscd}

\theoremstyle{plain}

\newtheorem{corollary}{Corollary}

\newtheorem{definition}{Definition}

\newtheorem{proposition}{Proposition}

\newtheorem{theorem}{Theorem}

\def\Ext{\operatorname{Ext}}

\newcommand{\iny}{\hookrightarrow} 

\newcommand{\N}{\mathbb{N}}

\def\Ext{\operatorname{Ext}}
\DeclareMathOperator{\fin}{FIN}
\DeclareMathOperator{\cofin}{COFIN}

\begin{document}

\title{Local complementation and  the extension of bilinear mappings}

\author{J.M.F. Castillo, A. Defant, R. Garc{\'{\i}}a, D. P\'erez-Garc{\'{\i}}a, J. Su\'arez}

\address{Departamento de Matem\'aticas, Facultad de Ciencias, Univ. de Extremadura,
Avda. de Elvas s/n, 06071 Badajoz (Spain).}
\email{castillo@unex.es} \email{rgarcia@unex.es}

\address{Instit\"{u}t fur Mathematik, Universit\"{a}t Oldenburg, Postfach 2503, 26111 Oldenburg (Germany).}
 \email{defant@mathematik.uni-oldenburg.de}
\address{Departamento de An\'alisis Matem\'atico, Facultad de CC Matem\'aticas, Univ. Complutense de Madrid, Pza. de Ciencias 3,  Ciudad Universitaria, 28040 Madrid (Spain).}
 \email{dperez@mat.ucm.es}


 \address{Escuela Polit\'ecnica, Universidad de Extremadura, Avenida de la Universidad s/n, 10071 C\'aceres, Spain.}
             \email{jesus@unex.es}

\thanks{The research of authors 1, 3 and 5 has been supported in part by MTM2010-20190-C02-01 and
Junta de Extremadura CR10113  ``IV Plan Regional I+D+i, Ayudas a
Grupos de Investigaci\'on''. The research of author 5 has been
supported in part by a grant inside the network ``Phenomena in
high dimensions" at the University of Oldenburg under the
supervision of professor A. Defant.}

\address{ }

\subjclass[2000]{46B20, 46B28, 46G20.}

 \keywords{Hahn-Banach Extension theorems, bilinear forms, polynomials, Banach spaces.}

\maketitle
\begin{abstract}
We study different aspects of the connections between local theory
of Banach spaces and the problem of the extension of bilinear
forms from subspaces of Banach spaces. Among other results, we
prove that if $X$ is not a Hilbert space then one may find a
subspace of $X$ for which there is no Aron-Berner extension. We
also obtain that the extension of bilinear forms from all the
subspaces of a given $X$ forces such $X$ to contain no uniform
copies of $\ell_p^n$ for $p\in[1,2)$. In particular, $X$ must have
type $2-\varepsilon$ for every $\varepsilon>0$. Also, we show that
the bilinear version of the Lindenstrauss-Pe\l czy\'{n}ski and
Johnson-Zippin theorems fail. We will then consider the notion of
locally $\alpha$-complemented subspace for a reasonable tensor
norm $\alpha$, and study the connections between $\alpha$-local
complementation and the extendability of $\alpha^*$ -integral
operators.
\end{abstract}

\section{Introduction}

The importance of the Hahn-Banach theorem in the linear theory of
Banach spaces has encouraged the study of possible non-linear
versions. Nevertheless, it is not difficult to see that there is
no hope for a general Hahn-Banach result in this non-linear
situation and the case of the scalar product in a Hilbert space is
clear: this bilinear form can be extended from $\ell_2$ to a
bigger superspace $X$ if and only if $\ell_2$ is complemented in
$X$. The same happens to every Banach space isomorphic to its dual
\cite{cagajiann}. There are however several interesting partial
results. Curiously enough, there seem to be only two types of
positive results: crude extension results and results involving
linearity of the extension process. The two main sections of this
paper, Sections 3 and 4, will study these two aspects of the
extension problem for bilinear forms separately, as we describe
now.

The general context for the problem is to take  $Y$ a subspace of
$X$ and consider the restriction operator $R:
\mathcal{B}(X)\rightarrow \mathcal{B}(Y)$ between the spaces of
bilinear (or, more generally,  multilinear) continous forms.\\

\begin{enumerate}

\item When is $R$ surjective?  (i.e., when does there exist an
extension, or else, a \emph{Hahn-Banach type} theorem  for
bilinear forms?). The problem is considered in Section
\ref{sec:Hahn}. Previous work can be found in \cite{cagajipro,
cagajiann, ca, manolo, hans, kiry, pe1}. Very recently, this
question has been related to Bell inequalities in
Quantum Information Theory \cite{bell}.  Our results in this line nicely complement those in \cite{fu}.\\
\\

\item When does, moreover, $R$ admit a linear and continuous
section? (i.,e when does there exist  an \textit{Aron-Berner type}
extension theorem for bilinear  forms?). This aspect of the
problem is considered in Section \ref{sec:morphism} and connected
with the complementation of the base spaces. Previous work in this
direction can be found in \cite{arbe, cagavi, cagajipro, ca, di,
gagamamu, za}.

\end{enumerate}

The paper is thus organized as follows: after this introduction
and a ``preliminaries" section, we treat in section 3 the
possibility of extension theorems for bilinear forms. A
combination of homological techniques and local theory of Banach
spaces shows that most classical extension theorems for linear
operators fail for bilinear forms: indeed, no Lindenstrauss-Pe\l
czy\'nski \cite{lipe} or Johnson-Zippin \cite{johnzipp} theorems
remain valid in the bilinear setting. Contrarily to what occurs
with  Maurey's extension theorem \cite{dijato}: we obtain an
analogue for  bilinear forms and show that if all bilinear forms
on subspaces of $X$ can be extended to $X$ then $X$ must have type
$2-\varepsilon$ for every $\varepsilon>0$ and weak type $2$ (see Theorem \ref{maurey}).\\

In section 4 we study the possibility of having linear continuous
extension for  bilinear forms, also called Aron-Berner extension.
It is well known \cite{cagajipro,di,za} that the existence of Aron-Berner extension
for bilinear forms implies local complementation, which
immediately implies, for instance, that the only subspaces of an
$\mathcal L_\infty$ (resp. $\mathcal L_1$) space admitting
Aron-Berner extension are those of the same type. Since the only
space all whose subspaces are locally complemented is the Hilbert
space, it follows that the only space for which there exists
Aron-Berner extension for all its subspaces is the Hilbert space (see Theorem \ref{thm:Hilbert}).
We will extend the result for  a finitely generated tensor norm (see Theorem \ref{alphalc}). \\

In section 5, we study the local character of multi-linear extension with respect
to a finitely generated tensor norm $\alpha$. We introduce the notion of local $\alpha$-extension (see definition \ref{alphaext})
and characterize when an $\alpha$-polynomial (resp. multilinear) may be extended to an $\alpha$-polynomial (resp. multilinear) (see Theorems \ref{rootns} and \ref{rootns1}).
Finally, we extend the notion of locally complemented subspace to a locally $\alpha$-complemented subspace, and show that it leads
to the extendability of $\alpha^*$ -integral operators (see Theorem \ref{rootns2}).

\section{Notation and Preliminares}

 In \cite{deflo} the interested reader can
find the most important tensor norms on tensor products of Banach spaces (
$\varepsilon$, $\pi$, $\omega _{2}$, ...). Given a Banach space $X$, we
denote by $\fin(X)$ or $\cofin(X)$, respectively, the set of all isometric finite dimensional or finite codimensional closed subspaces of
$X$.\\

Given a tensor norm $\alpha$ on $E_1\otimes...\otimes E_m$, we say
that $\alpha$ is finitely generated if for $z\in
\otimes_{\alpha}^m E_i$ the norm can be computed as
$\|z\|_{\otimes_{\alpha}^m E_i}=\inf
\{\|z\|_{\otimes_{\alpha}^m M_i}\, : \,M_i\in \fin(E_i)\}.$\\

Let $\alpha$ be a finitely generated tensor norm. We denote by
$\mathcal{A}(E_1,...,E_m):=(\widetilde{\otimes}_{\alpha}^m E_i)^*$
the ideal of multi-linear maps
associated to $\alpha$, see \cite{FH}. We denote the norm in this
space by $\|\cdot\|_{\mathcal{A}}$. Under the natural
identification, we will also refer to $\mathcal{A}(E_1,...,E_m)$ as
the space of $\alpha^*$-integral $m$-forms on $E_1\times...\times
E_m$, this is, the space of all $m$-linear maps $\varphi$ on
$E_1\times...\times E_m$
such that $L_{\varphi}\in \mathcal{A}(E_1,...,E_m)$ endowed with the norm $\|\varphi\|_{\alpha^*}:=\|L_{\varphi}\|_{\mathcal{A}}$. We denote $\mathcal{A}_m(E)$ if $E=E_{i}$ for all $i$.
\\

A Banach space $X$ is said to be an $\mathcal{L}_p$ for $1\leq p
\leq \infty$ if there exist a constant $\lambda>0$ such that for
every finite dimensional subspace $E$ of $X$ there exists another
 finite dimensional subspace $F$ of $X$ with $F\supseteq E$ and $d(F,\ell_p^{\dim
F})\leq \lambda$.\\

A Banach space $X$ is said to have \textit{type} $p$ (resp.
\textit{cotype} $q$) for $1\leq p\leq 2$ (resp. $2\leq q\leq
\infty $) if there is a constant $C$ such that for all finite
subsets $x_{1},\ldots ,x_{n}$ of $X$
\begin{equation*}
\left\Vert \sum r _{i}x_{i}\right\Vert _{L_2[0,1]}\leq C\left( \sum
\Vert x_{i}\Vert ^{p}\right) ^{1/p} \left(\text{resp. }\left( \sum
\Vert x_{i}\Vert ^{p}\right) ^{1/p}\leq C\left\Vert \sum
r_{i}x_{i}\right\Vert _{L_2[0,1]}\right),
\end{equation*}
where $r_i$ denotes the $i$-th Rademacher function. We recall that
$\mathcal{L}_{p}$-spaces have type $\min\{p,2\}$ and cotype
$\max\{2,p\}$ and that a Banach space has both type $2$ and cotype
$2$ if and only if it is isomorphic to a Hilbert space, see e.g.
\cite{pi}.

A Banach space $X$ is said to have {\it weak type} 2
\cite[p.~172]{PiVol} if there is a constant $C$ and a $\delta\in
(0,1)$, so that whenever $E$ is a subspace of $X$ and an operator
$T:E\to\ell_2^n$, there is an orthogonal projection $P$ on
$\ell_2^n$ of rank $>\delta n$ and an operator $S:X\to\ell_2^n$
with
$$Sx=PTx\;\;{\rm for\;all}\;\;x\in E,\;\;\;{\rm
and}\;\;\;\|S\|\le C\|T\|.$$

It is clear that type $2$ implies weak type $2$, although the
converse fails.

\subsection{Locally complemented subspaces and short exact sequence}
A short exact sequence of Banach spaces and linear continuous
operators is a diagram $$
\begin{CD}
0@>>>Y@>i>>X@>q>>Z@>>>0
 \end{CD}$$in which the image of each arrow coincides with the kernel of the following one. The open mapping theorem ensures that, given a sort exact sequence as before, $Y$ is a subspace of $X$ ($i$ is an injection map) and $Z$ is the corresponding quotient $X/Y$  ($q$ is a quotient map). The dual sequence
$$\begin{CD}
0 @>>> Z^* @>q^*>>X^* @>i^*>> Y^*@>>> 0
\end{CD}
$$ is also exact also. An  exact sequence is said to \emph{split} if $Y$ is complemented in $X$; which
means that there is a linear continuous projection $p: X\to Y$.
The sequence is said to \emph{locally split} if the dual sequence
splits.

\begin{definition}[\cite{K}] \label{def:local-comp}
Let $Y$ be a subspace of $X$ through a map $i$. We say that $Y$ is
locally complemented in $X$ (through $i$) if there exists a
constant $\lambda>0$ such that for every $F\in \fin(X)$ there
exist a retract $r_{F}:F \to Y$, this is $r_{F}i=id$, with
$\|r_{F}\|\leq \lambda$. For the quantitative version we will say
locally $\lambda$-complemented. i.e.; the exact sequence $0\to Y\to X \to Z\to 0$ $\lambda$-locally splits.
\end{definition}

Kalton proved in \cite{K} that the following conditions are equivalent:
\begin{enumerate}
 \item[(1)] $Y$ is locally $\lambda$-complemented in $X$ through $i$
  \item[(2)] $Y^*$ is $\lambda$-complemented in $X^*$ through $i^*$
  \item[(3)] Every finite rank operator $T:Y \to F$ can be extended to $X$, say by
  $\widetilde{T}$ with $\|\widetilde{T}\|\leq \lambda \|T\|$.\\
\end{enumerate}

It is therefore clear that $Y$ is locally ($\lambda$) complemented
in $X$ if and only if the exact sequence $0\to Y\to X \to Z\to 0$
($\lambda$) locally splits; and this happens if and only if the
dual sequence $0\to Z^*\to X^* \to Y^*\to 0$  ($\lambda$)
splits.\\

We borrow from homological algebra the notation $\Ext(Z,Y) =0$ to
mean that all exact sequences $0\to Y\to X \to Z\to 0$ split. In
\cite{cagajipro} a homological approach was  applied to the
problem of extendibility of bilinear forms from $Y$ to $X$ to
show:\\

\begin{enumerate}
  \item[$\bullet$] If  $\Ext(Y,(X/Y)^*)=0$ and  $\Ext(X/Y,X^*)=0$  then all bilinear forms on $Y$ extend to $X$.\\
\item[$\bullet$]   If  $\Ext(X/Y,Y^*)=0$ and  $\Ext(X,(X/Y)^*)=0$  then all bilinear forms on $Y$ extend to $X$.\\
\end{enumerate}

This immediately implies that bilinear forms on $\mathcal L_1$
subspaces of $\mathcal L_1$ spaces extend to the bigger space (as
well as bilinear form on subspaces of $\mathcal L_\infty$ spaces
inducing $\mathcal L_\infty$-quotients) (\cite{cagasu,K}).
And all bilinear forms on $Y$ extend to $\mathcal L_1$ if only if $\Ext(Y,(\mathcal L_1/Y)^*)=0$, equivalently $\Ext(\mathcal L_1/Y,Y^*)=0$ (see \cite[Thm. 2.3 ]{cagajipro} ).

\section{Hahn-Banach type extension theorems for bilinear forms}\label{sec:Hahn}
The problem of the extension of --even scalar!-- bilinear forms is
still mostly open. There is only a small number of papers
\cite{cagavi,cagajipro,cagajiann,ca,ca4,manolo,kiry} in which a
few techniques are developed.

Let us briefly recall the existing basic extension theorems for
linear continuous operators: the Hahn-Banach theorem, the
Lindenstrauss-Pe\l czy\'nski theorem \cite{lipe}, the
Johnson-Zippin theorem \cite{johnzipp} and Maurey's theorem
\cite{dijato}. Are there analogues for bilinear forms? \\

\subsection{Bilinear Lindenstrauss-Pe\l czy\'nski theorem.} The Lindenstrauss - Pe\l czy\'n ski theorem \cite{lipe} establishes
that every $C(K)$-valued operator defined on a subspace of $c_0$
can be extended to the whole $c_0$. Thus, can bilinear forms
defined on subspaces of $c_0$ be extended to $c_0$? The answer is
a sounding no. Let $(F_n)_n$ denote a sequence of finite
dimensional Banach spaces uniformly isomorphic to their duals.

\begin{proposition} Let $X$ be a Banach space containing a non-locally complemented subspace isomorphic to $c_0( F_n)$.
There is a bilinear form on $c_0(F_n)$ that cannot be extended to
$X$.
\end{proposition}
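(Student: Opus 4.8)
The plan is to translate the extension problem into operator language and then build the bilinear form out of the self‑dualities of the $F_n$. Write $Y=c_0(F_n)$ with the given embedding $i\colon Y\hookrightarrow X$, and recall $Y^*=\ell_1(F_n^*)$. A bounded bilinear form $B$ on $Y$ is the same thing as a bounded operator $T_B\colon Y\to Y^*$ via $B(x,y)=\langle T_Bx,y\rangle$, and $B$ extends to a bounded bilinear form $\widetilde B$ on $X$ exactly when there is $S\in\mathcal L(X,X^*)$ with $i^*Si=T_B$. Meanwhile ``$Y$ not locally complemented in $X$'' means, by Kalton's theorem, that $i^*\colon X^*\twoheadrightarrow Y^*$ admits no bounded linear section; the form of this I would actually use is that the relative projection constants $\lambda_N:=\lambda(Y_N,X)$ of the initial blocks $Y_N:=F_1\oplus\cdots\oplus F_N$ (carrying the sup-norm inherited from $Y$) are unbounded. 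The sequence $(\lambda_N)$ is non-decreasing, since $Y_N$ is norm-one complemented in $Y_{N+1}$ by the coordinate projection, and if $\lambda_N\le\lambda$ for all $N$ then restricting a projection onto $Y_N$ to a finite-dimensional $E\subseteq X$ (after noting $E\cap Y$ essentially sits in some $Y_N$) exhibits $Y$ as locally $\lambda$-complemented.

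Now use the hypothesis: fix isomorphisms $v_n\colon F_n\to F_n^*$ with $\|v_n\|,\|v_n^{-1}\|\le C$ for all $n$, and put $b_n(x,y)=\langle v_nx,y\rangle$, so each $b_n$ is a uniformly bounded, uniformly invertible bilinear form on $F_n$. For a summable sequence of positive weights $(w_n)$ set $B((x_n),(y_n))=\sum_n w_n b_n(x_n,y_n)$; this is a bounded bilinear form on $Y$ with $\|B\|\le C\sum_n w_n$, and its operator $T_B$ is block-diagonal. If $\iota_n\colon F_n\hookrightarrow Y\hookrightarrow X$ and $J_N\colon Y_N\hookrightarrow Y\hookrightarrow X$ denote the obvious inclusions, then any $S$ with $i^*Si=T_B$ satisfies $\iota_n^*S\iota_n=w_nv_n$ and, more generally, $J_N^*SJ_N=\operatorname{diag}(w_1v_1,\dots,w_Nv_N)\colon Y_N\to Y_N^*$, which is invertible. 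Hence, were $B$ to extend to $\widetilde B$ on $X$ with $S=T_{\widetilde B}$, the operator $(J_N^*SJ_N)^{-1}J_N^*S\colon X\to Y_N$ would be a projection onto $Y_N$, and computing the norm of $\operatorname{diag}(w_n^{-1}v_n^{-1})$ from the $\ell_1$-sum norm on $Y_N^*$ to the sup-norm on $Y_N$ gives
\[
\lambda_N\;\le\;\frac{C\,\|\widetilde B\|}{\min_{n\le N}w_n}\qquad\text{for every }N .
\]
So the task reduces to choosing the weights so that this inequality becomes impossible, i.e.\ so that $\lambda_N\cdot\min_{n\le N}w_n$ is unbounded.

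The step I expect to be the genuine obstacle is precisely this last choice. Boundedness of $B$ forces $\sum_n w_n<\infty$, hence $w_n\to 0$ (and $nw_n\to 0$ if one insists on decreasing weights), whereas the only a priori information on the $\lambda_N$ is $\lambda_N\to\infty$ together with the Kadec--Snobar bound $\lambda_N\le(\dim F_1+\cdots+\dim F_N)^{1/2}$; a crude diagonal choice of weights therefore need not defeat the displayed inequality. Morally, $Y=c_0(F_n)$ is ``$\ell_\infty$-like'' while $Y^*=\ell_1(F_n^*)$ is ``$\ell_1$-like'', so no single diagonal form can be simultaneously bounded and well-conditioned on all blocks. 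Overcoming this requires replacing the crude diagonal $B$ by one adapted to the finite-rank operators that witness the failure of local complementation in Kalton's criterion: one wants a specific finite-dimensional subspace of $Y$ on which $B$ is both norm-controlled and uniformly invertible and whose projection constant in $X$ is large, and here the uniform self-duality of the $F_n$ is exactly what lets one identify that subspace with a piece of $Y^*$ so that the extended operator is forced to realise a projection of norm exceeding $\|\widetilde B\|$ times the conditioning of $B$. For such a $B$ the inequality above fails for a suitable $N$, so $B$ does not extend to $X$, which proves the proposition.
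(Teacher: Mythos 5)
There is a genuine gap, and it is exactly at the step you flagged yourself: the choice of weights. In your setup the only obstruction you extract from ``$c_0(F_n)$ not locally complemented in $X$'' is the unboundedness of the relative projection constants $\lambda_N$ of the initial sums $Y_N=F_1\oplus\cdots\oplus F_N$, and your extension $S$ only yields the inequality $\lambda_N\le C\|\widetilde B\|/\min_{n\le N}w_n$. This can never be contradicted by an admissible weight sequence in general: summability forces $\min_{n\le N}w_n\le\frac1N\sum_{n\le N}w_n\le\|w\|_1/N$, so you would need $\lambda_N/N$ to be unbounded along a subsequence, whereas the hypothesis only gives $\lambda_N\to\infty$, possibly as slowly as $\log N$. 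So it is not merely that ``a crude diagonal choice need not work'': with the obstruction quantified through initial sums and the worst weight $\min_{n\le N}w_n$, no choice of weights completes the argument, and the concluding appeal to a form ``adapted to the finite-rank operators witnessing Kalton's criterion'' is a gesture, not a construction. (A smaller point: your one-line justification that bounded $\lambda_N$ gives local complementation by restricting a projection to a finite-dimensional $E$ is shaky as stated, since $E\cap Y$ need not sit in any $Y_N$; the equivalence is true, but via a weak$^*$-limit of the projections giving an operator $X\to Y^{**}$ that is the identity on $Y$.)

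The paper's proof keeps your diagonal form but quantifies the failure of local complementation per block, in the dual, and this is the missing idea. Non-local-complementation means the dual sequence $0\to c_0(F_n)^\perp\to X^*\to\ell_1(F_n^*)\to 0$ does not split; because $\ell_1(F_n^*)$ is an $\ell_1$-sum, uniformly bounded liftings of the single-block inclusions $j_n:F_n^*\to\ell_1(F_n^*)$ into $X^*$ could be glued into a bounded splitting, so the best lifting constants $g(n)$ of the individual $j_n$ must be unbounded. An extension of the weighted diagonal form produces, via the self-dualities $\tau_n$, liftings of $\sigma_n j_n$ bounded by $\|D'\|M$, so one only has to beat $g(n)$ one block at a time: for any unbounded $g$ one can place a sparse summable weight (e.g.\ $\sigma_{n_k}=2^{-k}$ on a subsequence with $g(n_k)\ge k2^k$) making $\sigma_n g(n)$ unbounded. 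This per-block dual formulation is what makes the weight selection always possible, in contrast with your initial-sum formulation, where the needed inequality $\sup_N\lambda_N\min_{n\le N}w_n=\infty$ can fail for every summable $(w_n)$.
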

\begin{proof} Observe that the hipothesis already implies  $\lim dim F_n =+\infty$ since $c_0$ is locally complemented in any superspace. Let, for each $n\in \N$, $\tau_n: F_n\to F_n^*$ be an isomorphism so that $\sup_n \|\tau_n\|\|\tau_n^{-1}\|= M<+\infty$. Since $c_0(\mathcal F)$ is not locally complemented in $X$, the dual sequence
$$\begin{CD}
0 @>>> c_0(F_n)^\perp @>>>X^* @>>> \ell_1(F_n^*)@>>> 0
\end{CD}
$$ does not split, where $c_0(F_n)^\perp=(X/c_0(F_n))^* $.  This exactly means that any lifting of the
canonical inclusion $j_n: F_n^* \to \ell_1(F_n^*)$ to $X^*$ must
have norm greater than or equal to $g(n)$ with  $\lim g(n)=+\infty$.
Let $\sigma\in \ell_1$ be such that $\lim \sigma_n g(n)=+\infty$.  We define the ``diagonal"
operator
\begin{equation*}
D_\sigma: c_{0}(F_n )\longrightarrow \ell _{1}(F_n^{\ast })
\end{equation*}
given by $D[(f_{n})_{n}]= (\sigma_n \tau_n f_{n})_n$.
Let $J: c_0(F_n^*) \to c_0(F_n)$ be the isomorphism $J(g_n) = (\tau_n^{-1} g
_n)$. The existence of a lifting $D': c_0(F_n)\to X^*$ for $D$ would imply the existence of a lifting
$D'J$ for $DJ$ with norm at most $\|D'\|M$. Thus, there are uniformly bounded liftings for the restrictions $DJ_{|F_n^*} = \sigma_n j_n$, in contradiction with the assumption $\lim \sigma_n g(n)=+\infty$.
\end{proof}

\begin{corollary} Let $X$ be a Banach space containing uniform copies of $\ell_2^n$ not uniformly complemented. There are bilinear forms on some subspace of $c_0(X)$ that cannot be extended to the whole $c_0(X)$.
\end{corollary}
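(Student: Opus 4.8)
The plan is to deduce this from the Proposition just proved, applied to the ambient space $c_0(X)$ in place of $X$. First I would fix subspaces $E_n\subseteq X$ with $d(E_n,\ell_2^n)\le C$ for some fixed $C$ which are \emph{not} uniformly complemented in $X$, i.e.\ whose relative projection constants $\lambda(E_n,X)$ are unbounded; such $E_n$ exist by hypothesis. Since the Banach--Mazur distance is invariant under duality and $\ell_2^n$ is self-dual, one also has $d(E_n^*,\ell_2^n)\le C$, so there are isomorphisms $\tau_n\colon E_n\to E_n^*$ with $\sup_n\|\tau_n\|\,\|\tau_n^{-1}\|\le C^2<\infty$; hence $(F_n):=(E_n)$ is an admissible choice for the sequence of finite-dimensional spaces uniformly isomorphic to their duals appearing in the Proposition, and $\dim E_n=n\to\infty$. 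The subspace of $c_0(X)$ that will carry the non-extendable bilinear form is simply the isometric copy of $c_0(E_n)$ sitting coordinatewise inside $c_0(X)$.

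The key step is to verify that this copy of $c_0(E_n)$ is \emph{not} locally complemented in $c_0(X)$; granting this, the Proposition (with $c_0(X)$ for $X$ and $c_0(E_n)$ for $c_0(F_n)$) at once yields a bilinear form on $c_0(E_n)$ that does not extend to $c_0(X)$, which is the assertion. I would prove the non-local-complementation by contradiction, using Kalton's criterion recalled in the preliminaries: if $c_0(E_n)$ were locally $\lambda$-complemented in $c_0(X)$, then every finite rank operator on $c_0(E_n)$ would extend to $c_0(X)$ with norm at most $\lambda$ times as large. Write $i_n\colon X\to c_0(X)$ and $\pi_n\colon c_0(X)\to X$ for the (norm-one) $n$-th coordinate inclusion and projection, and $i_n^E,\pi_n^E$ for their analogues on $c_0(E_n)$; let $j\colon c_0(E_n)\hookrightarrow c_0(X)$ and $\iota_n\colon E_n\hookrightarrow X$ be the inclusions, so that $j\,i_n^E=i_n\,\iota_n$. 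Applying the extension property to the norm-one finite rank operator $T=\pi_n^E\colon c_0(E_n)\to E_n$ gives $\widetilde T\colon c_0(X)\to E_n$ with $\widetilde T\,j=T$ and $\|\widetilde T\|\le\lambda$; then $P_n:=\widetilde T\,i_n\colon X\to E_n$ satisfies $\|P_n\|\le\lambda$ and $P_n\iota_n=\mathrm{id}_{E_n}$ (a one-line diagram chase), so $\iota_nP_n$ is a projection of $X$ onto $E_n$ of norm $\le\lambda$, uniformly in $n$ --- contradicting the choice of the $E_n$.

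I expect this last reduction to be the only real point: recognizing that local complementation of a $c_0$-sum inside a $c_0$-sum is controlled coordinatewise and therefore forces \emph{uniform} complementation of the summands. Everything else is bookkeeping --- the duality computation that turns ``uniform copies of $\ell_2^n$'' into the hypothesis of the Proposition, and the routine identities among the coordinate maps of $c_0(X)$ and $c_0(E_n)$. I would also note in passing that the same scheme works with any sequence of finite-dimensional spaces uniformly isomorphic to their duals in place of the $\ell_2^n$, so the corollary really is an immediate consequence of the Proposition.
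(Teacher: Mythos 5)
Your argument is correct and is essentially the paper's own proof: embed $c_0(E_n)$ coordinatewise in $c_0(X)$, observe that local complementation would let the coordinate projections extend uniformly (Kalton's finite-rank criterion), forcing the $E_n$ to be uniformly complemented, and then invoke the Proposition using the self-duality of $\ell_2^n$. The only difference is that you spell out the bookkeeping (the duality constant $C^2$ and the diagram chase $P_n\iota_n=\mathrm{id}_{E_n}$) that the paper leaves implicit.
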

\begin{proof} BY the hypothesis, $X$ contains copies $j_n: \ell_2^n\to X$, hence $c_0(X)$ contains $c_0(\ell_2^n)$ via the natural isomorphism $(j_n): c_0(\ell_2^n) \to c_0(X)$. If this $c_0(\ell_2^n)$ is locally complemented, the canonical projections $p_n: c_0(\ell_2^n) \to \ell_2^n$ could be uniformly extended to $c_0(X)$, yielding uniformly complemented copies of $\ell_2^n$, against the hypothesis.
\end{proof}

An obvious corollary is:

\begin{corollary}
There is a bilinear form on a subspace of $c_0$ that cannot be
extended to $c_{0}$.
\end{corollary}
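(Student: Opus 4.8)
The plan is to derive this last corollary as a direct instance of the previous corollary, by exhibiting inside $c_0$ a copy of $c_0(\ell_2^n)$ that fails to be locally complemented. First I would recall the standard fact that $c_0 \cong c_0(\ell_2^n)$ as Banach spaces: indeed $c_0$ is isometric to $c_0(G_n)$ for any sequence of finite-dimensional spaces $G_n$ as long as one is only claiming isomorphism, since $c_0(\ell_2^n)$ has an unconditional, shrinking, boundedly complete-quotient-type structure and, being a separable $\mathcal L_\infty$-space with the appropriate structure, is isomorphic to $c_0$ (this is classical; alternatively, regard $c_0(\ell_2^n)$ directly as a subspace of $c_0$ by using that each $\ell_2^n$ embeds into some $\ell_\infty^{k(n)}$ and block these along the $c_0$-sum). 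So taking $X=\ell_2$ in the previous corollary, the hypothesis ``$X$ contains uniform copies of $\ell_2^n$ not uniformly complemented'' is satisfied in the trivial direction of containment, but one must check the \emph{non}-uniform-complementation clause.

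Here is where the one real point lies: I must verify that $\ell_2$ contains copies of $\ell_2^n$ that are \emph{not} uniformly complemented. Wait --- in $\ell_2$ every subspace is $1$-complemented, so that reading is wrong. The correct move is to apply the previous corollary with an $X$ that is itself $\mathcal L_\infty$, say $X=c_0$ or $X=\ell_\infty^n$-sum constructions, because $c_0$ contains $\ell_2^n$'s uniformly (via Dvoretzky) but these are not uniformly complemented in $c_0$ (a uniform complementation would, by a standard averaging/ultraproduct argument, force $c_0$ to have cotype $2$, which it does not; equivalently $\ell_\infty^n$ does not contain $\lambda$-complemented $\ell_2^n$ with $\lambda$ independent of $n$). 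Then $c_0(X)=c_0(c_0)\cong c_0$, and the previous corollary produces a bilinear form on a subspace of $c_0(c_0)\cong c_0$ that does not extend to $c_0(c_0)\cong c_0$. Transporting through the isomorphism $c_0(c_0)\cong c_0$ (which changes the bilinear form by composition with a bounded operator and its transpose, preserving both boundedness and non-extendability, since extendability is invariant under isomorphism of the big space) finishes the argument.

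The key steps in order are therefore: (i) cite Dvoretzky's theorem to get uniform copies $\ell_2^n \hookrightarrow c_0$; (ii) observe these are not uniformly complemented in $c_0$ --- if they were, $c_0$ would have weak cotype $2$ / finite cotype-$2$ constants along $\ell_2^n$, contradicting that $c_0$ fails cotype $q$ for every $q<\infty$; (iii) invoke the previous corollary with $X=c_0$ to get a non-extendable bilinear form on a subspace of $c_0(c_0)$; (iv) use $c_0(c_0)\cong c_0$ and the isomorphism-invariance of the extension property to transfer the example to $c_0$ itself.

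The main obstacle, and the only step needing care, is (ii): pinning down precisely why the Dvoretzky copies of $\ell_2^n$ in $c_0$ cannot be uniformly complemented. The cleanest route is the cotype argument --- a projection $P_n:c_0\to \ell_2^n$ with $\sup_n\|P_n\|<\infty$ would let one pull back the cotype-$2$ inequality of $\ell_2^n$ (with constant $1$) to a fixed finite-dimensional piece of $c_0$ of growing dimension with a uniform constant, but $c_0$ (equivalently, the spaces $\ell_\infty^N$) has cotype-$2$ constant tending to infinity, a contradiction. Everything else is bookkeeping with the isomorphism $c_0\cong c_0(c_0)$.
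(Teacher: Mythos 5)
Your overall route is the one the paper intends (the paper offers no proof, deriving the statement as ``obvious'' from the preceding corollary): apply that corollary with $X=c_0$ and use $c_0(c_0)\cong c_0$. However, the one step you yourself single out as the only one needing care, (ii), is justified by an argument that does not work. Uniformly complemented copies of $\ell_2^n$ would not, by themselves, contradict the failure of cotype: cotype constants pass to subspaces with no complementation hypothesis and in the harmless direction ($C_2(E)\le C_2(X)$), so a ``good'' finite-dimensional subspace never contradicts a ``bad'' ambient space, and the implication ``uniformly complemented $\ell_2^n$'s imply finite cotype'' is simply false, as $\ell_2\oplus c_0$ shows. The genuine obstruction is Grothendieck's theorem: every operator from $c_0$ (or any $\mathcal L_\infty$-space) into a Hilbert space is $2$-summing, $\pi_2(T)\le K\|T\|$ with a universal $K$. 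Hence if $E\subset c_0$ is $2$-isomorphic to $\ell_2^n$ via $u:E\to\ell_2^n$ and $P:c_0\to E$ is a projection, then $\sqrt{n}=\pi_2(\mathrm{id}_{\ell_2^n})\le \pi_2\bigl((uP)|_E\bigr)\,\|u^{-1}\|\le K\|u\|\|u^{-1}\|\|P\|\le 2K\|P\|$, so $\|P\|\ge \sqrt{n}/(2K)$ and no copies of $\ell_2^n$ in $c_0$ (equivalently in the $\ell_\infty^m$'s) are uniformly complemented. With that classical fact cited in place of the cotype argument, your steps (i), (iii), (iv) are fine: $\ell_2^n$ embeds $(1+\varepsilon)$-isomorphically into $\ell_\infty^{k(n)}\subset c_0$, the previous corollary applies to $X=c_0$, and extendability of bilinear forms transports through the isomorphism $c_0(c_0)\cong c_0$.

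A further correction: your opening claim that $c_0\cong c_0(G_n)$ for an arbitrary sequence of finite-dimensional spaces, in particular $c_0\cong c_0(\ell_2^n)$, is false, and its falsity is precisely the phenomenon being exploited: $c_0(\ell_2^n)$ embeds into $c_0$ but is not an $\mathcal L_\infty$-space, hence (by the paper's remarks after the corollary) it is neither isomorphic to $c_0$ nor locally complemented in it. Your final chain (i)--(iv) never uses this claim, so it does not damage the argument, but it should be removed. Alternatively, you could bypass the corollary altogether and apply the paper's first Proposition directly with $F_n=\ell_2^n$ (isometric to their duals) and the embedding $c_0(\ell_2^n)\hookrightarrow c_0(\ell_\infty^{k(n)})=c_0$, again using the $\sqrt{n}$ lower bound on projection constants to rule out local complementation of $c_0(\ell_2^n)$ in $c_0$.
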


This shows that no bilinear Lindenstrauss-Pe\l czy\'nski theorem
about the extension of $C(K)$-valued operators on subspaces of
$c_0$ is possible. \\

\noindent \textbf{Remarks.} There are aspects of this problem not
well understood. Observe that a subspace $Y$ of $c_0$ is locally
complemented if and only if it is an $\mathcal L_\infty$-space, in
which case \cite{Alspach} it must be isomorphic to $c_0$. Thus,
$c_0(F_n)$ is not locally complemented in $c_0$ unless it is
isomorphic to $c_0$; which implies that the spaces $F_n$ must be
uniformly complemented in $c_0$, hence in some $\ell_\infty^m$. It
is however a hard open problem, see \cite{moreplic}, to determine
if a sequence $F_n$ of finite dimensional Banach spaces uniformly
complemented in $\ell_\infty^m$ must be uniformly isomorphic to
$\ell_\infty^k$. On the other hand, it follows from
\cite{cagajipro} that if a subspace $Y$ of $c_{0}$ admits the
extension of bilinear forms to $c_0$ then it is a Hilbert-Schmidt
space, namely $\mathfrak L(Y, \ell_2) = \Pi_2(Y, \ell_2)$. For a
subspace of the form $c_0(F_n)$ this condition implies that there
is a uniform constant $C$ so that for every $n$ the
$\pi_2$-summing norm and the standard operator norm are
$C$-equivalent on $\mathfrak L(F_n, \ell_2^n)$. We do not know if
this condition implies that the $F_n$ are uniformly complemented
in $\ell_\infty^m$. All this together suggests:

\noindent \textbf{The $c_0$-conjecture.} A subspace of $c_0$ on
which every bilinear form can be extended to the whole $c_0$ is
isomorphic to $c_0$.\\

\subsection{Bilinear Johnson-Zippin theorem.} The Johnson-Zippin theorem \cite{johnzipp} establishes that $\mathcal{L}_{\infty
}$-valued operators defined on weak*- subspaces of $\ell
_{1}$ can be extended to the whole $\ell_1$. Again, there is no
bilinear Johnson-Zippin theorem. Indeed, one has

\begin{proposition} Let $(F_n)$ be a sequence of finite dimensional Banach spaces that is dense, in the Banach-Mazur distance, in the family of all finite dimensional Banach spaces. If $H$ is any subspace of $c_0$
such that $H^* =\ell_1(F_n)$ then there are bilinear forms on $H^\perp$  that cannot be extended to
 $\ell_1$.
\end{proposition}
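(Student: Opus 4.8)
The plan is to dualize the situation, apply the homological criterion recalled in Section~2 to reduce to the non-vanishing of an \(\Ext\) group, and then produce an explicit non-extendable bilinear form by a diagonal argument in the style of the previous Proposition; the density of \((F_n)\) enters by letting us regroup the blocks into uniformly self-dual ones.

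Dualizing \(0\to H\to c_0\to c_0/H\to 0\) and using \(c_0^*=\ell_1\), \(H^*=\ell_1(F_n)\) gives an exact sequence \(0\to H^\perp\to\ell_1\xrightarrow{Q}\ell_1(F_n)\to 0\), in which \(H^\perp=(c_0/H)^*\) is a weak\(^*\)-closed subspace of \(\ell_1\) with \(\ell_1/H^\perp\cong\ell_1(F_n)\); dualizing again yields \(0\to\ell_\infty(F_n^*)\to\ell_\infty\xrightarrow{\rho}(H^\perp)^*\to 0\), where \(\rho\) is the restriction map \(\ell_1^*\to(H^\perp)^*\) and \((H^\perp)^*=\ell_\infty/\ell_\infty(F_n^*)\). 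Since \(\ell_1\) is an \(\mathcal L_1\)-space, the criterion of \cite{cagajipro} recalled in Section~2 says that all bilinear forms on \(H^\perp\) extend to \(\ell_1\) if and only if \(\Ext(\ell_1(F_n),(H^\perp)^*)=0\), equivalently \(\Ext(H^\perp,\ell_\infty(F_n^*))=0\). Unwinding the latter (and using that \(\ell_\infty\) is injective, so that any lifting of an operator defined on \(H^\perp\) extends to \(\ell_1\)), a bilinear form \(B\) on \(H^\perp\), viewed as \(\widehat B\colon H^\perp\to(H^\perp)^*\), extends to \(\ell_1\) exactly when \(\widehat B\) lifts through \(\rho\) to an operator \(H^\perp\to\ell_\infty\); so it suffices to produce one bilinear form on \(H^\perp\) admitting no such lifting.

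Density is used twice. It forces \(\sup_n\lambda(F_n^*)=\infty\) for the projection constant \(\lambda\), since the family contains spaces \(2\)-isomorphic to \(\ell_\infty^k\) for all \(k\) and \(\lambda((\ell_\infty^k)^*)=\lambda(\ell_1^k)\gtrsim\sqrt k\); hence \(\ell_\infty(F_n^*)\) is uncomplemented in \(\ell_\infty\) and the minimal norm \(g(n)\) of a lifting through \(Q\) of the block inclusion \(\iota_n\colon F_n\hookrightarrow\ell_1(F_n)\) satisfies \(\sup_n g(n)=\infty\) (such a lifting embeds \(F_n\) in \(\ell_1\) with distortion \(\le g(n)\), while \(F_n\approx\ell_\infty^k\) is \(\gtrsim\sqrt k\)-far from every subspace of \(\ell_1\)). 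Moreover, as every finite-dimensional space is approximated by the \(F_n\) infinitely often, we may match them into pairs \(\{G_j,G'_j\}\) with \(d_{BM}(G'_j,G_j^*)<2\); setting \(R_j:=G_j\oplus_1 G'_j\) we get \(\ell_1(F_n)\cong\ell_1(R_j)\) with \((R_j)\) \emph{uniformly self-dual} (using \(d(A\oplus_1 B,A\oplus_\infty B)\le 2\) twice together with \(A\oplus B\cong B\oplus A\)), and \(\sup_j\lambda(R_j^*)=\infty\), \(\sup_j g(j)=\infty\) persist since some \(R_j\) contain \(\ell_\infty^k\) as a norm-one complemented subspace with \(k\to\infty\).

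Finally, mimic the previous Proposition in this dual picture: choose isomorphisms \(\tau_j\colon R_j\to R_j^*\) with \(\sup_j\|\tau_j\|\,\|\tau_j^{-1}\|<\infty\), liftings \(L_j\colon R_j\to\ell_1\) of \(\iota_j\) through \(Q\), and \((\sigma_j)\in\ell_1\) with \(\lim_j\sigma_j g(j)=+\infty\); the diagonal operator \((x_j)\mapsto(\sigma_j\tau_j x_j)\) from \(\ell_1(R_j)\) to \(\ell_\infty(R_j^*)\) is the form \(\bigl((x_j),(y_j)\bigr)\mapsto\sum_j\sigma_j\langle\tau_j x_j,y_j\rangle\), and transporting it to a bounded bilinear form \(B\) on \(H^\perp\), a lifting of \(\widehat B\) through \(\rho\) would (composing with the \(L_j\) and with the isomorphisms built from the \(\tau_j^{-1}\), exactly as in that proof) give uniformly bounded liftings through \(Q\) of the maps \(\sigma_j\iota_j\), which is impossible since such a lifting has norm \(\ge\sigma_j g(j)\to\infty\). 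The step I expect to be delicate is the word ``transporting'': unlike \(c_0(F_n)\) in the previous Proposition, \(H^\perp\) is only given as \(\ker Q\) and need not carry an \(\ell_1\)-block decomposition adapted to the \(R_j\), so one must either first treat the concrete model (\(H=c_0(F_n^*)\) embedded block by block, for which \(H^\perp\cong\ell_1(K_n)\) with \(K_n=(F_n^*)^\perp\subseteq\ell_1^{N_n}\)) and transfer the conclusion, or else argue entirely on the bidual side by pulling back the non-split sequence \(0\to\ell_\infty(R_j^*)\to\ell_\infty\to(H^\perp)^*\to 0\) along the diagonal operator \(H^\perp\to(H^\perp)^*\) induced by the \(\tau_j\).
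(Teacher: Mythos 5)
Your reduction is sound and coincides with the paper's starting point: dualizing to $0\to H^\perp\to\ell_1\to\ell_1(F_n)\to 0$ and invoking the criterion recalled in Section 2 correctly turns the statement into the non-vanishing of $\Ext(\ell_1(F_n),(H^\perp)^*)$, and your observation that a single bilinear form $B$ on $H^\perp$ extends to $\ell_1$ exactly when $\widehat B\colon H^\perp\to(H^\perp)^*$ lifts through $\rho\colon\ell_\infty\to(H^\perp)^*$ (using injectivity of $\ell_\infty$) is correct. The genuine gap is the step you yourself flag as ``delicate'', and it is not a technicality: the diagonal operator built from the $\tau_j$ acts from $\ell_1(R_j)\cong\ell_1/H^\perp$ into $\ell_\infty(R_j^*)=\ker\rho$, whereas what you must exhibit is an operator from $H^\perp$ into $(H^\perp)^*$. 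Since $H^\perp$ is only given as $\ker Q$ and carries no block structure adapted to the $F_n$ (or $R_j$), there is no ``diagonal operator $H^\perp\to(H^\perp)^*$ induced by the $\tau_j$'' along which to pull back the sequence, and your final contradiction does not even typecheck: the $L_j$ map $R_j$ into $\ell_1$, not into $H^\perp$, so they cannot be composed with a lifting of $\widehat B$. The fallback of treating the concrete model $H=c_0(F_n^*)$ does not repair this either: the proposition is asserted for \emph{every} $H$ with $H^*=\ell_1(F_n)$, and nothing in your argument transfers the conclusion between different such $H$; moreover even in the model the blocks of $H^\perp$ are annihilators $K_n\subset\ell_1^{N_n}$ which need not be uniformly isomorphic to their duals, so the construction of the previous Proposition does not simply reproduce itself. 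Your correct preliminary facts (unbounded projection constants of $F_n^*$, unbounded lifting norms $g(n)$, the regrouping into uniformly self-dual $R_j$) only show that $0\to H^\perp\to\ell_1\to\ell_1(F_n)\to 0$ does not (locally) split, which is a necessary but not sufficient condition for the failure of bilinear extension.

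The paper closes precisely this gap by a soft argument that avoids constructing any explicit form: assuming all bilinear forms on $H^\perp$ extend, one has $\Ext(\ell_1(F_n),(c_0/H)^{**})=0$; by the uniform boundedness result of \cite{snark} this yields $\Ext(F_n,(c_0/H)^{**})=0$ uniformly in $n$, and since $(F_n)$ is dense in all finite-dimensional spaces this forces $(c_0/H)^{**}$, hence $c_0/H$, to be an $\mathcal L_\infty$-space; but then $\ell_1(F_n)$ would be an $\mathcal L_1$-space, which the density of $(F_n)$ again forbids. So the density hypothesis is exploited through this ``uniform $\Ext$-vanishing over a dense family $\Rightarrow\mathcal L_\infty$'' step rather than through projection constants or self-dual regrouping. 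To salvage your constructive route you would need a substitute for that step, namely a mechanism converting the non-uniform vanishing of $\Ext(F_n,(H^\perp)^*)$ into a single non-liftable operator defined on $H^\perp$ itself; as written, that mechanism is exactly what is missing.
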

\begin{proof}  Standard facts from homological algebra (see \cite{cagajiann}, or else \cite{temps}) yield that
for a subspace $A$ of $\ell_1$ the assertion ``all operators
$\mathfrak L(A,B)$ can be extended to $\ell_1$" can be
reformulated as $\Ext(\ell_1/A, B)=0$. Hence, given $H\subset c_0$
that all bilinear forms on $H^{\bot}$ --i.e., operators from
$\mathfrak L(H^\perp, H^{**})$-- can be extended to $\ell_1$ is
equivalent to $\Ext(H^*, (c_0/H)^{**}) = 0$, see \cite[Theorem
2.3]{cagajiann}. In the situation we are considering this means
$\Ext(\ell_1(F_n), (c_0/H)^{**}) = 0$. But this means that
$\Ext(F_n, (c_0/H)^{**}) = 0$ uniformly on $n$ --see
\cite{snark}-- from where it follows that $(c_0/H)^{**}$ is an
$\mathcal{L}_{\infty}$-space. But then also $c_0/H$ must be an
$\mathcal{L}_{\infty}$-space, which would make $\ell_1(F_n)$ an
$\mathcal L_1$-space, which is impossible.
\end{proof}
An immediate corollary is:
\begin{corollary} Every $\mathcal L_1$-space admits a bilinear form on a subspace
that cannot be extended to the whole space.
\end{corollary}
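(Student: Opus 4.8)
The plan is to deduce the statement from the previous Proposition by combining two facts: that every infinite dimensional $\mathcal L_1$-space contains an isomorphic copy of $\ell_1$, and the triviality that a bounded bilinear form on a Banach space restricts to any subspace. (The statement is only meaningful for infinite dimensional $L$: on a finite dimensional space every bilinear form on a subspace does extend, so we read ``$\mathcal L_1$-space'' here as infinite dimensional $\mathcal L_1$-space.)

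I would proceed as follows. Fix an isomorphic embedding $\iota\colon\ell_1\to L$; the existence of such an embedding is classical structure theory of $\mathcal L_1$-spaces (it also follows from the fact that an infinite dimensional $\mathcal L_1$-space is weakly sequentially complete and non-reflexive, hence contains $\ell_1$ by Rosenthal's $\ell_1$-theorem, and one does not even need the copy to be complemented). Next, invoke the Proposition: choosing a sequence $(F_n)$ dense in the Banach--Mazur distance and a subspace $H\subset c_0$ with $H^*=\ell_1(F_n)$ — such an $H$ is readily constructed from the fact that each finite dimensional space embeds, with distortion as close to $1$ as we like, into some $\ell_\infty^{k}$, so that the corresponding $c_0$-sum placed inside $c_0$ has dual $\ell_1(F_n)$ for an appropriate dense $(F_n)$ — the Proposition yields a bounded bilinear form $\Phi$ on the subspace $H^{\perp}\subset c_0^*=\ell_1$ that cannot be extended to $\ell_1$. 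Transport $\Phi$ along $\iota$ to a bounded bilinear form $\Phi'$ on the subspace $\iota(H^{\perp})$ of $L$ by $\Phi'(\iota x,\iota y):=\Phi(x,y)$. Finally, argue by contradiction: a bounded bilinear extension $\Psi$ of $\Phi'$ to all of $L$ would, upon composing with $\iota\times\iota$, give a bounded bilinear form on $\ell_1$ that restricts to $\Phi$ on $H^{\perp}$, contradicting the Proposition. Since $L$ was an arbitrary infinite dimensional $\mathcal L_1$-space, this proves the corollary.

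The only input that is not completely routine is the embedding $\ell_1\hookrightarrow L$; I regard this as the main point, and it does genuinely use the $\mathcal L_1$ property — merely containing the $\ell_1^n$ uniformly, and even uniformly complemented, would not suffice, as the reflexive space $\big(\sum_n\ell_1^n\big)_{\ell_2}$ shows. Everything else is immediate: the well-definedness and boundedness of $\Phi'$ (because $\iota$ is an isomorphism onto its image), the existence of the dense sequence $(F_n)$ together with the witnessing $H\subset c_0$, and the one-line check that composing an extension on $L$ with $\iota\times\iota$ produces an extension on $\ell_1$.
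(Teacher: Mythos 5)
Your proof is correct and is essentially the derivation the paper intends (the paper states the corollary without proof as ``immediate''): every infinite-dimensional $\mathcal L_1$-space contains an isomorphic copy of $\ell_1$, so the subspace $H^\perp\subset\ell_1$ from the Proposition sits inside the given space, and any bounded bilinear extension to the whole space would restrict along the embedding to an extension to $\ell_1$, contradicting the Proposition. Your filling-in of the two background facts (the existence of $H$ with $H^*=\ell_1(F_n)$, and the classical embedding $\ell_1\hookrightarrow\mathcal L_1$) is accurate and appropriate.
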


This is somewhat surprising since every bilinear form on $\mathcal L_1$-spaces can be extended to
$\mathcal L_1$-superspaces (see \cite{cagajiann}). The surprise passes when one is shown that the
case of $\ell_1$ is essentially the unique possible: it has been
shown in  \cite{manolo} that if a subspace $Y$, with unconditional
basis, of an $\mathcal{L}_1$-space is such that every $m$-linear
form $\varphi$ on $Y$ extends to one $\widetilde{\varphi}$ on the
whose space with $\|\widetilde{\varphi}\|\le C^m \|\varphi\|$,
then $Y=\ell_1$.\\

After this shock of negative results let us present a positive one. It is well known that Maurey's
extension theorem admits a bilinear version so we provide a sort of converse.

\begin{theorem}\label{maurey}$\;$
\begin{enumerate}
\item (Maurey) If $X$ has type $2$, then every bilinear form on a subspace
of $X$ can be extended to the whole $X$.

\item If every bilinear form on every subspace of $X$ can be
extended to $X$ then $X$ must have type $2-\varepsilon $ for every
$\varepsilon>0$. \end{enumerate}\end{theorem}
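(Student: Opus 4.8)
The plan is to prove part (2) by contraposition: assuming $X$ fails to have type $2-\varepsilon$ for some fixed $\varepsilon>0$, I will exhibit a subspace $Y$ of $X$ and a bilinear form on $Y$ that does not extend to $X$. The starting point is the standard fact (Maurey--Pisier type theory, see \cite{pi}) that if $X$ does not have type $2-\varepsilon$, then $X$ contains the spaces $\ell_p^n$ uniformly for some $p = p(\varepsilon) \in [1,2)$; more precisely, there are subspaces $E_n \subseteq X$ with $d(E_n,\ell_p^n)\le C$ uniformly in $n$. The key obstruction is then that these uniform copies of $\ell_p^n$ cannot be uniformly complemented in $X$ — indeed, if they were, a gliding-hump / direct-sum argument would force $X$ to contain a complemented copy of $\ell_p$, and Maurey's theorem (part (1)) together with the fact that $\ell_p$ for $p<2$ does \emph{not} have the property that all bilinear forms on its subspaces extend (the scalar product on $\ell_2 \subseteq \ell_p$ obstructs this, since $\ell_2$ is not complemented in $\ell_p$ for $p \ne 2$) gives a contradiction. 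So the real content is: \emph{non-uniformly-complemented uniform copies of $\ell_p^n$ obstruct bilinear extension.}

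The mechanism for turning ``not uniformly complemented'' into ``a non-extendable bilinear form'' is exactly the one already used in the proof of the first Proposition of \S\ref{sec:Hahn} (the bilinear Lindenstrauss--Pe\l czy\'nski counterexample). Concretely, I would form the subspace $Y = \big(\sum_n E_n\big)_{\ell_2} \subseteq \ell_2(X) $ inside $c_0(X)$ or, better, work directly inside $X$ by passing to $c_0(E_n)$ after embedding — but the cleanest route is: since the $E_n$ are uniformly isomorphic to $\ell_p^n$ and $\ell_p^n$ is uniformly isomorphic to its dual $\ell_{p'}^n$ only when $p=2$, one uses instead the self-duality of $\ell_2^n$: the relevant finite-dimensional pieces to feed into the earlier Proposition are spaces uniformly isomorphic to their own duals. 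So I would first extract from the uniform $\ell_p^n$'s a further sequence of uniform $\ell_2^k$'s (every $\ell_p^n$ contains $\ell_2^{k}$ with $k \sim$ a fixed power of $n$, uniformly, by Dvoretzky or by the containment $\ell_2^k \hookrightarrow \ell_p^n$ with uniform constants), check that these $\ell_2^k$ inside $X$ are not uniformly complemented (again because a uniform complementation would propagate to complementation of $\ell_p$ hence of $\ell_2$, contradicting $p<2$), and then invoke the Corollary to the first Proposition: ``$X$ contains uniform copies of $\ell_2^n$ not uniformly complemented'' yields a non-extendable bilinear form on a subspace of $c_0(X)$, and finally transfer this back to $X$ itself using that all the constructions live in a separable subspace and that $X$, failing type $2-\varepsilon$, actually contains the necessary $c_0(\ell_2^n)$-type structure — or, alternatively, absorb this last transfer by noting the hypothesis of the theorem is about \emph{all} subspaces of $X$, so it suffices that \emph{some} subspace carries a non-extendable form.

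The step I expect to be the main obstacle is precisely this last transfer: the earlier Proposition produces a bad bilinear form on a subspace of $c_0(X)$, not of $X$, so I must either (a) show $c_0(X)$ embeds in $X$ in the relevant situation — false in general — or (b) re-run the homological argument of the Proposition directly with the $\ell_2^k \subseteq X$ in place of $c_0(F_n) \subseteq c_0(X)$. Option (b) requires checking that ``$\ell_2(E_n)$ is not locally complemented in $X$'' (which follows from the $E_n$ not being uniformly complemented, via Kalton's criterion (3) in the excerpt, since a uniform bound on extensions of the finite-rank projections $P_n : \ell_2(E_n) \to E_n$ would make the $E_n$ uniformly complemented), and then the diagonal-operator construction from the Proposition's proof applies verbatim with $F_n$ replaced by $E_n$ and $\tau_n$ the self-duality isomorphism of $\ell_2^n$ transported to $E_n$. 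This gives a bilinear form on $\ell_2(E_n) \subseteq X$ that does not extend to $X$, completing the contrapositive. The weak type $2$ assertion follows along the way, since failure of weak type $2$ likewise produces, by \cite[p.~172]{PiVol}, uncomplemented-in-a-uniform-sense copies of $\ell_2^n$ feeding the same machine; but as this is only mentioned parenthetically in the statement I would treat it as a remark rather than belabor it.
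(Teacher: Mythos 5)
There is a genuine gap at the heart of your argument: the step where you claim that the uniform copies of $\ell_2^k$ you extract (by Dvoretzky, or by the containment $\ell_2^k\hookrightarrow \ell_p^n$) are \emph{not uniformly complemented} in $X$. Your justification --- ``a uniform complementation would propagate to complementation of $\ell_p$ hence of $\ell_2$, contradicting $p<2$'' --- does not work. Finite representability of $\ell_p$ in $X$ gives no infinite-dimensional copy of $\ell_p$ in $X$ at all (so there is nothing for a gliding-hump argument to assemble, and the same objection applies to your opening claim that $X$ would contain a complemented copy of $\ell_p$), and there is simply no contradiction in a space containing $\ell_p^n$'s uniformly while also containing \emph{uniformly complemented} $\ell_2^k$'s: $L_p$ with $1<p<2$ is exactly such a space, since the spans of the first $n$ Rademacher (or Gaussian) functions are uniformly complemented there by the boundedness of the Rademacher projection. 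So generic copies of $\ell_2^k$ obtained from Dvoretzky-type containment carry no complementation obstruction, and your machine has nothing to feed on. What is needed --- and what the paper uses --- is a \emph{specific} construction of badly complemented Hilbertian pieces: by Bennett--Dor--Goodman--Johnson \cite[Theorem 3.1]{bedo} there are uniformly bounded embeddings $T_n:\ell_2^n\to E_n\subset L_p$ with no uniformly bounded projections; since finite-dimensional subspaces of $L_p$ admit $(1+\varepsilon)$-embeddings into $\ell_p^{k(n)}$, these bad copies can be pushed into $\ell_p^{k(n)}$ and hence, via the finite representability, into $X$, and the lack of uniformly bounded projections survives because any projection from $X$ restricts to one from the intermediate $\ell_p^{k(n)}$-copy.

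Once the correct obstruction is in place, the rest is much shorter than your route (and your worry about transferring from $c_0(X)$ back to $X$ evaporates): no infinite-dimensional subspace such as $\ell_2(E_n)$ and no re-run of the homological/diagonal argument is needed. Each copy $i_n(\ell_2^n)\subset X$ is itself a subspace; if every bilinear form on every subspace of $X$ extends, then by a closed-graph/uniform-boundedness argument it extends with a uniform constant, and extending the scalar product of $\ell_2^n$ to a bilinear form $B$ on $X$ yields a projection $Px=\sum_i B(x,e_i)e_i$ onto the copy with norm controlled by $\|B\|$ and the embedding constants --- uniformly in $n$, contradicting the choice of the $T_n$. Finally, Maurey--Pisier \cite[Theorem 3.11]{pi} converts ``no $\ell_p$ with $p\in[1,2)$ is finitely representable in $X$'' into ``$X$ has type $2-\varepsilon$ for every $\varepsilon>0$''. (The weak type $2$ assertion is not part of this theorem's proof in the paper; it is handled separately via the Maurey extension property and the Milman--Pisier theorem, so your parenthetical treatment of it is fine in spirit but should not be attributed to the same mechanism.)
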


\begin{proof}
It is clear that the first claim goes back to Maurey's extension
Theorem \cite[Corollary 12.23]{dijato}. For the second, let us
assume that there exists $p<2$ such that $\ell_p$ is finitely
representable in $X$. Then $\ell _{p}^{n}$ is uniformly embedded in
$X$ via $j_{n}:\ell _{p}^{n}\hookrightarrow X$. By the construction
given in \cite[Theorem 3.1]{bedo} to obtain an un-complemented copy
of $\ell _{2}$ in $L_{p}$ we can consider $T_{n}:\ell
_{2}^{n}\rightarrow E_{n}\subset L_{p}$ to be uniformly bounded
embeddings with no uniformly
bounded projections. Moreover, for every $n$, there exist $%
(1+\varepsilon )$-embeddings $\Phi _{n}:E_{n}\rightarrow \ell
_{p}^{k(n)}\subset L_{p}$, for $k(n)$ sufficiently large. So we
have embeddings $i_{n}=\Phi _{n}\circ T_{n}$ $:\ell
_{2}^{n}\rightarrow \ell _{p}^{k(n)}$, that are uniformly bounded
and with no uniformly bounded projections:\\
$$
\begin{CD}
 \ell _{2}^{n} @>i_n>> \ell _{p}^{k(n)}  @>j_n>> X\\
  @VI_nVV  @V\widetilde{I}_{n}VV @V\widetilde{I}_{n}VV \\
\ell _{2}^{n} @<<< \ell_{p^{\ast }}^{k(n)} @<<< X^*\\
\end{CD}
$$
\\

\noindent where $I_{n}:\ell _{2}^{n}\rightarrow \ell _{2}^{n}$ is
the identity (Riesz-duality) associated to the bilinear scalar
product. Since we can extend bilinear forms from subspaces of $X$,
we can do it with a uniform constant (by the closed graph theorem).
And this contradicts the fact that the $i_n$'s do not have uniformly
bounded projections. Since $\ell_p$ is not finitely representable
for $p\in[1,2)$, by Maurey-Pisier Theorem \cite[Theorem 3.11]{pi},
$X$ has type $2-\varepsilon$ for every $\varepsilon>0$.
\end{proof}

The result is somehow optimal since the converse to (2) fails: the
Kalton-Peck space $Z_2$ \cite{kape} has type $2-\varepsilon$ for
every $\varepsilon>0$ and contains an uncomplemented copy of
$\ell_2$. Consequently, the canonical bilinear form on $\ell_2$
cannot be extended to $Z_2$.\\

Nevertheless, we can still improve it a little turning in a
different direction. Recall that a Banach space $X$ is said to en
enjoy the Maurey extension property (MEP) if for every subspace
$E\subset X$ all linear continuous operators $E\to \ell_2$ can be
extended to operators $X\to \ell_2$. It is not to hard to show
that MEP can be reformulated in finite dimensional terms: there is
a constant $C>0$ such that for all finite dimensional subspaces
$F\subset X$ all norm one operators $F\to \ell_2$ can be extended
to operators $X\to \ell_2$ with norm at most $C$. Type $2$ spaces
have MEP, and a theorem of Milman and Pisier
\cite[Th.~10]{MilmanPisier} establishes that $X$ must have weak
type $2$. One has:

\begin{proposition} If a Banach space $X$ is such that all bilinear forms on all subspaces can be extended to $X$ then
$X$ has the Maurey extension property, hence it must have weak
type 2.\end{proposition}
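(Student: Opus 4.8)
The plan is to establish that $X$ has the Maurey extension property; the assertion about weak type $2$ is then exactly the Milman--Pisier theorem \cite[Th.~10]{MilmanPisier} recalled above. Using the finite dimensional reformulation of MEP, it is enough to exhibit a constant $C$ so that every norm one operator $T\colon F\to\ell_2$ with $F\in\fin(X)$ extends to an operator $X\to\ell_2$ of norm at most $C$; since the range of such a $T$ is finite dimensional we may and do assume $T\colon F\to\ell_2^n$ for some $n$.

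The first move is to trade the operator $T$ for a bilinear form: on $F$ put $b(x,y):=\langle Tx,Ty\rangle$, a \emph{positive} symmetric bilinear form with $\|b\|=\|T\|^2\le 1$. By hypothesis $b$ extends to a bounded bilinear form on $X$, and --- arguing exactly as in the proof of Theorem \ref{maurey}, via the closed graph theorem applied to the surjective restriction operator $\mathcal{B}(X)\to\mathcal{B}(F)$ --- one may take this extension $\widetilde b$ with $\|\widetilde b\|\le C$ for a constant $C$ not depending on $F$ or on $T$; symmetrizing, we may also assume $\widetilde b$ is symmetric.

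It remains to read off from $\widetilde b$ an $\ell_2$-valued extension of $T$, and this is the heart of the matter. The key remark is that it suffices to produce a \emph{positive} bounded bilinear form $\beta$ on $X$ that extends $b$: any such $\beta$ can be written $\beta(x,y)=\langle Sx,Sy\rangle$ for an operator $S\colon X\to\ell_2$ with $\|S\|^2=\|\beta\|$ (the Hilbert space canonically associated with a positive bilinear form), and from $\langle Sx,Sy\rangle=\langle Tx,Ty\rangle$ for $x,y\in F$ one sees that $S|_F$ and $T$ have the same Gram matrix, so $S|_F=UT$ for an isometry $U$ of $\ell_2$, whence $U^{-1}S$ extends $T$ with norm $\|S\|\le C^{1/2}$. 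So the whole problem reduces to the purely bilinear statement that \emph{a positive bounded bilinear form on a subspace of $X$ admitting some bounded extension to $X$ admits a positive one, with norm controlled only in terms of $C$}. To prove this I would pass to the self-adjoint operator $L:=L_{\widetilde b}\colon X\to X^*$, which satisfies $i^*Li=T^*T$ for the inclusion $i\colon F\hookrightarrow X$, and factor the relevant part of $L$ through the finite dimensional --- hence $1$-complemented --- Hilbert space $\overline{T(F)}$, completing it there. The step I expect to be the main obstacle is precisely keeping the norm of the resulting positive extension bounded \emph{independently of $\dim F$}: a careless factorization through $\overline{T(F)}$ pays a factor governed by the smallest nonzero singular value of $T$, so one must split off the directions of $T$ carrying small singular values, extend those cheaply by Hahn--Banach, and arrange the two estimates so that they combine into a dimension-free bound. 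Granting this, $X$ has MEP, and the proof is complete.
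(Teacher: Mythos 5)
The central step of your argument is exactly the one you leave unproved, and granting it begs the question. From the hypothesis you only get a bounded, in general \emph{non-positive}, symmetric extension $\widetilde b$ of the Gram form $\langle Tx,Ty\rangle$, and the passage from $\widetilde b$ to an $\ell_2$-valued extension of $T$ --- equivalently, by the Gram factorization you yourself describe, to a \emph{positive} bounded extension --- is precisely the content separating ``all bilinear forms on subspaces extend'' from the Maurey extension property. Your sketch does not supply it: composing $L_{\widetilde b}\colon X\to X^*$ with a factorization through $\overline{T(F)}$ costs a factor of order $1/s_{\min}(T)$, and the proposed repair (treat the small singular directions ``cheaply by Hahn--Banach'') does not assemble into one positive form of controlled norm, since Hahn--Banach extends scalar functionals one at a time, preserves neither positivity nor a Hilbert-space-valued structure, and nothing controls the cross terms between the two blocks. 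A second, smaller, gap is the uniformity of the constant $C$: the closed graph theorem applied to $\mathcal B(X)\to\mathcal B(F)$ gives a constant depending on the subspace $F$, not a single $C$ valid for all finite-dimensional $F$ (the paper is admittedly just as brisk about this point inside the proof of Theorem \ref{maurey}).

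The paper's own proof is organized precisely to avoid both difficulties: it argues by contraposition. If MEP fails there are $(F_n,\phi_n)$, $\|\phi_n\|=1$, whose extensions must have norm at least $g(n)\to\infty$; after a standard perturbation (\cite{moreplic}) the $F_n$ form a finite-dimensional decomposition of $\Sigma F_n$, and with weights $\sigma\in\ell_1$, $\sigma_n g(n)\to\infty$, the single bilinear form $\Phi((f_n),(g_n))=\sum\sigma_n\langle\phi_nf_n,\phi_ng_n\rangle$ on this one subspace cannot be extended to $X$ --- so no uniform constant over varying subspaces is ever invoked. If you want to keep your direct route, the positivity obstacle can be dodged by not using the Gram form at all: by the Davis--Dean--Singer argument (\cite{dadesi}) choose a finite-codimensional subspace almost complementary to $F$, inside it take by Dvoretzky's theorem an almost isometric copy $G$ of $\ell_2^n$ with isomorphism $u\colon G\to\ell_2^n$, and extend the pairing $\beta(f+g,f'+g')=\langle Tf,ug'\rangle$ defined on $F\oplus G$; an extension $\widetilde\beta$ immediately gives $S\colon X\to\ell_2^n$ by $\langle Sx,h\rangle=\widetilde\beta(x,u^{-1}h)$, which satisfies $S|_F=T$ and $\|S\|\le\|\widetilde\beta\|\,\|u^{-1}\|$. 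This is the mechanism behind the diagram in the proof of Theorem \ref{maurey}(2), where the forms being extended are nondegenerate pairings rather than Gram forms of arbitrary operators; but even then some gluing device in the spirit of the paper's proof is needed to obtain the dimension-free constant.
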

\begin{proof} The failure of MEP means the existence of a sequence $(F_n)$ of finite dimensional
subspaces of $X$ and norm one operators $\phi_n: F_n\to \ell_2$
such that any extension of $\phi_n$ to $X$ has norm at least
$g(n)$ with $\lim g(n)=+\infty$. Let $\sigma\in \ell_1$ such that
$\lim \sigma_n g(n)= + \infty$. Standard techniques ---see
\cite{moreplic}-- allow us to assume that $(F_n)$ forms a
finite-dimensional decomposition of its closed linear span $\Sigma
F_n$. This subspace of $X$ admits the bilinear form $\Phi: \Sigma
F_n \to (\Sigma F_n)^*$ defined as

$$\Phi((f_n))(g_n) = \sum \sigma_n <\phi_n f_n, \phi_n g_n>,$$

which cannot be extended to $X$.

\end{proof}

There are other situations in which it is possible to extend
bilinear forms: such is the case of  $\mathcal L_\infty$-spaces
such as  $A(\mathbb{D})$ -the disk algebra-, $H^{\infty }$ or
Pisier's spaces  ${P}$ verifying ${P}\widehat{\otimes
}_{\varepsilon }{P}={P}\widehat{\otimes }_{\pi }{P}$. Actually, it
was proved in \cite[Theorem 2.2]{cagajipro} that every bilinear
form on $Y$ extends to any super-space iff $Y\widehat{\otimes
}_{\omega_2}Y=Y\widehat{\otimes }_{\pi}Y$.\\

We conclude this section with a final remark. One additional
obstruction to the possibility of obtaining extension theorems for
bilinear forms is the following: bilinear forms are not, as a
rule, weakly sequentially continuous (wsc, in short); and the
scalar product in a Hilbert space is the perfect example.
Nevertheless, all $C(K)$-spaces, as all spaces having the
Dunford-Pettis property, have all bilinear forms wsc. Therefore
the only chance to have an extension theorem for bilinear forms
from a Banach space $X$ to some superspace $C(K)$ is that all
bilinear forms on $X$ must be wsc. This conditions is however not
enough, since all subspaces of $c_0$ have the Dunford-Pettis
property and, as we have already shown, many of them admit
bilinear forms that cannot be extended to $c_0$; the extension to
bigger $C(K)$-spaces is impossible too by \cite[Proposition
2.1]{cagajipro}. On the positive side, bilinear forms on
$A(\mathbb{D})$ or on $\mathcal L_\infty$-spaces can be extended
to any superspace \cite[Example 1]{cagajipro}.

\section{Linear continuous extension of bilinear forms}\label{sec:morphism}

The Aron-Berner \cite{arbe} type extensions of bilinear forms is
linear continuous extension and  was shown to be equivalent to
local complementation in \cite{cagajipro,di,za}.

\begin{proposition}\label{prop:castillo}
The following conditions are equivalent:

\begin{itemize}
\item[(1)]  $Y$ is locally complemented in $X$ through $j$.

\item[(2)]  $Y\widehat{\otimes }_{\pi }Y$ is locally complemented in $X%
\widehat{\otimes }_{\pi }X$ (through $j\otimes j$).

\item[(3)]  There exists an extension operator $\Phi_{2}
:\mathcal{B}(Y)\rightarrow \mathcal{B}(X)$. i.e.; $\mathcal{B}(Y)$ is complemented in $\mathcal{B}(X)$.
\item[(3)]  There exists an extension operator $\Phi_{n}
:\mathcal{L}(^nY)\rightarrow \mathcal{L}(^nX)$. i.e.; $\mathcal{L}(^nY)$ is complemented in $\mathcal{L}(^nX)$.
\end{itemize}
\end{proposition}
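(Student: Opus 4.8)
The plan is to establish $(1)\Leftrightarrow(3)$ by a direct construction, deduce $(1)\Leftrightarrow(2)$ from it via trace duality together with Kalton's criterion recalled from \cite{K}, and then observe that the $n$--linear item follows by repeating the bilinear argument word for word. The workhorse will be the following reformulation of local complementation, which one reads off Kalton's equivalence $(1)\Leftrightarrow(2)$ in \cite{K}: \emph{$Y$ is locally $\lambda$--complemented in $X$ through $j$ if and only if the canonical embedding $\iota_Y\colon Y\to Y^{**}$ extends to an operator $L\colon X\to Y^{**}$ with $\|L\|\le\lambda$.} Indeed, from a bounded section $S\colon Y^*\to X^*$ of $j^*$ one puts $L:=S^*\iota_X$ and checks $Lj=\iota_Y$ using $S^*j^{**}=(j^*S)^*=\mathrm{id}_{Y^{**}}$ and the naturality identity $\iota_Xj=j^{**}\iota_Y$; conversely $S:=L^*\iota_{Y^*}$ is a section of $j^*$ because $(\iota_Y)^*\iota_{Y^*}=\mathrm{id}_{Y^*}$.

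For $(1)\Rightarrow(3)$ I would, given $b\in\mathcal B(Y)$ with linearization $\hat b\colon Y\to Y^*$, simply set $\Phi_2(b)(x_1,x_2):=\langle\hat b^{\,**}(Lx_1),Lx_2\rangle$; this is an Aron--Berner extension \cite{arbe} of $b$ transported by $L$, it is continuous with $\|\Phi_2(b)\|\le\|L\|^2\|b\|$, and since $b\mapsto\hat b\mapsto\hat b^{\,**}$ is linear, $\Phi_2\colon\mathcal B(Y)\to\mathcal B(X)$ is a bounded linear map. That $\Phi_2(b)$ restricts to $b$ on $Y\times Y$ is the identity $\hat b^{\,**}\iota_Y=\iota_{Y^*}\hat b$ combined with $Lj=\iota_Y$. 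Hence $\Phi_2$ is an extension operator and $\Phi_2R$ (with $R$ the restriction map) is a bounded projection of $\mathcal B(X)$ onto a copy of $\mathcal B(Y)$. Conversely, for $(3)\Rightarrow(1)$ I fix $y_0\in Y$ of norm one and $\psi\in Y^*$ with $\|\psi\|=\psi(y_0)=1$ by Hahn--Banach, and to $y^*\in Y^*$ associate the rank--one form $b_{y^*}(u,v)=y^*(u)\psi(v)$; then $S(y^*):=\Phi_2(b_{y^*})(\,\cdot\,,y_0)$ defines a bounded linear map $Y^*\to X^*$ with $j^*S=\mathrm{id}_{Y^*}$ (because $\Phi_2(b_{y^*})$ extends $b_{y^*}$), i.e. precisely Kalton's condition (2), hence $(1)$.

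For $(1)\Leftrightarrow(2)$ I invoke $\mathcal B(Y)=(Y\widehat{\otimes}_{\pi}Y)^*$ and $\mathcal B(X)=(X\widehat{\otimes}_{\pi}X)^*$, under which $(j\otimes j)^*$ is exactly the restriction operator $R$. When $(1)$ holds, $R$ splits by $(1)\Rightarrow(3)$, so its preadjoint $j\otimes j$ is bounded below and $Y\widehat{\otimes}_{\pi}Y$ really is a subspace of $X\widehat{\otimes}_{\pi}X$; then Kalton's $(1)\Leftrightarrow(2)$ applied to this pair says that $Y\widehat{\otimes}_{\pi}Y$ is locally complemented in $X\widehat{\otimes}_{\pi}X$ iff $(j\otimes j)^*=R$ splits iff $(3)$ holds, which closes the circle. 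The $n$--linear statement is handled identically: in $(1)\Rightarrow$ it set $\Phi_n(\varphi):=\overline\varphi\circ(L\times\cdots\times L)$ with $\overline\varphi$ the Aron--Berner extension of $\varphi$ to $(Y^{**})^n$ (iterated Arens adjoints in a fixed order, so $\varphi\mapsto\overline\varphi$ is linear and norm--nonincreasing), and for the converse use $\varphi_{y^*}(u_1,\dots,u_n)=y^*(u_1)\psi(u_2)\cdots\psi(u_n)$ and $S(y^*):=\Phi_n(\varphi_{y^*})(\,\cdot\,,y_0,\dots,y_0)$.

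The one genuinely substantial step is $(1)\Rightarrow(3)$: the difficulty is not to extend a single form but to do so \emph{linearly and boundedly in the form}, and this is exactly what the extension operator $L\colon X\to Y^{**}$ together with the linearity and norm--nonincreasing behaviour of the bidual/Aron--Berner extension $\hat b\mapsto\hat b^{\,**}$ provides. Everything else is formal, modulo the minor point that $j\otimes j$ must be seen to be an embedding for item $(2)$ to be literally a statement about a subspace.
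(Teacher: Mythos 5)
Your argument is correct: the reformulation of local complementation as the existence of $L\colon X\to Y^{**}$ with $Lj=\iota_Y$ is a legitimate consequence of Kalton's duality (the identities $S^*j^{**}=(j^*S)^*$, $\iota_X j=j^{**}\iota_Y$ and $(\iota_Y)^*\iota_{Y^*}=\mathrm{id}_{Y^*}$ all check out), the transported Arens/Aron--Berner extension $\Phi_2(b)(x_1,x_2)=\langle \hat b^{**}Lx_1,Lx_2\rangle$ is linear in $b$, restricts correctly by $\hat b^{**}\iota_Y=\iota_{Y^*}\hat b$, and the rank-one forms $b_{y^*}=y^*\otimes\psi$ do produce a section of $j^*$ from any extension operator; you also rightly note that a bounded section of $R=(j\otimes j)^*$ forces $j\otimes j$ to be bounded below, which is what makes item (2) meaningful. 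The route, however, is genuinely different from the paper's. The paper gives no self-contained proof of this proposition (it is quoted from \cite{cagajipro,di,za}); its own argument appears only for the generalization, Theorem \ref{alphalc}, and is purely local and tensorial: for $(1)\Rightarrow(2)$ one takes $F\in\fin(\otimes^m_\alpha X)$, encloses it in $\otimes^m F_i$ with $F_i\in\fin(X)$, and tensorizes the finite-dimensional retractions $r_{F_i}$; $(2)\Rightarrow(1)$ follows from heredity of local complementation under passing to complemented subspaces; and $(2)\Leftrightarrow(3)$ is Kalton's duality applied to the tensor products, whose duals are the multilinear ideals. Your scheme instead proves $(1)\Leftrightarrow(3)$ globally, via the operator $L$ and the bidual extension, and only then folds in (2) by trace duality plus Kalton. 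What each buys: the paper's local argument works verbatim for any finitely generated tensor norm $\alpha$ (it never leaves the finite-dimensional level and never invokes biduals or Arens products), and gives the quantitative estimate $\lambda^{m+1}$ used later for $\alpha$-integral forms; your construction is specific to $\pi$ (i.e.\ to all bounded $n$-linear forms), but it exhibits the extension morphism explicitly as an Aron--Berner extension composed with $L$, with the clean bound $\|\Phi_n\|\le\|L\|^n$, which is precisely the conceptual link between linear extension of multilinear forms and local complementation that Section 4 is about. The only points worth writing out in a final version are the ones you already flagged: the naturality computations behind the workhorse lemma, and the remark that under any of (1), (2), (3) the map $j\otimes j$ is automatically an isomorphic embedding, so Kalton's theorem may be applied to the pair $\bigl(Y\widehat{\otimes}_{\pi}Y,\;X\widehat{\otimes}_{\pi}X\bigr)$.
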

In particular, since $\mathcal{L}_{\infty}$-spaces are locally
complemented in every subspace, they always admit linear
continuous extension of bilinear forms to any superspace. A
partial positive answer to the $c_0$-conjecture can be presented.

\begin{proposition}
A subspace $Y$ of $c_0(\Gamma)$ admits  the extension of bilinear
forms to $c_0(\Gamma)$ in a linear and continuous form if and only
if it is isomorphic to some $c_0(I)$. The same is true for
n-linear maps.
\end{proposition}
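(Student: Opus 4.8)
The plan is to derive everything from Proposition~\ref{prop:castillo}, which tells us that for a subspace $Y$ of $X=c_0(\Gamma)$ the existence of a linear continuous extension operator $\mathcal{B}(Y)\to\mathcal{B}(X)$ --- and, simultaneously, of an extension operator $\mathcal{L}(^{n}Y)\to\mathcal{L}(^{n}X)$ for each $n$ --- is equivalent to $Y$ being locally complemented in $X$. So it suffices to prove that \emph{an infinite-dimensional subspace $Y$ of $c_0(\Gamma)$ is locally complemented if and only if it is isomorphic to some $c_0(I)$}; this is the transfinite counterpart of the $c_0$-remark recalled in Section~\ref{sec:Hahn}, and the bilinear and $n$-linear assertions then both drop out at one stroke.

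\emph{The ``if'' implication.} If $Y\cong c_0(I)$, then $Y$ is an $\mathcal{L}_{\infty,1+\varepsilon}$-space for every $\varepsilon>0$, and $\mathcal{L}_\infty$-spaces are locally complemented in every superspace (the fact used just after Proposition~\ref{prop:castillo}). Hence $Y$ is locally complemented in $c_0(\Gamma)$, and Proposition~\ref{prop:castillo} furnishes the extension operators, for $\mathcal{B}$ and for $\mathcal{L}(^{n}\cdot)$ alike.

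\emph{The ``only if'' implication.} Assume $Y$ is locally complemented in $c_0(\Gamma)$. By Kalton's characterization of local complementation, $Y^{*}$ is complemented in $c_0(\Gamma)^{*}=\ell_1(\Gamma)$; dualizing once more, $Y^{**}$ is isomorphic to a complemented subspace of $c_0(\Gamma)^{**}=\ell_\infty(\Gamma)$. Since $\ell_\infty(\Gamma)$ is $1$-injective and a complemented subspace of an injective space is injective, $Y^{**}$ is injective, hence an $\mathcal{L}_\infty$-space; consequently $Y$ itself is an $\mathcal{L}_\infty$-space, because any finite-dimensional $E\subset Y$ embeds into a finite-dimensional $G\subset Y^{**}$ with $d(G,\ell_\infty^{\dim G})$ close to the $\mathcal{L}_\infty$-constant of $Y^{**}$, and such a $G$ can be pulled back into $Y$ fixing $E$ by the principle of local reflexivity. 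Finally one invokes the structural theorem that an infinite-dimensional $\mathcal{L}_\infty$-subspace of $c_0(\Gamma)$ is isomorphic to some $c_0(I)$: in the separable case this is Alspach's theorem quoted in Section~\ref{sec:Hahn} (a separable subspace of $c_0(\Gamma)$ lies inside a complemented copy of $c_0$), and the general case is obtained by transfinite induction along a projectional resolution of the identity of $c_0(\Gamma)$, applying the separable case to the successive separable pieces of $Y$ and gluing the resulting isomorphisms into a single isomorphism with $c_0(I)$.

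The two dualization steps and the local-reflexivity pull-back are routine. The step where genuine effort is needed --- and the one I expect to be the main obstacle --- is the non-separable structural theorem: one must keep the isomorphism constants and the compatibility of the partial isomorphisms $c_0\cong(\text{separable piece of }Y)$ under control along the transfinite induction so that they assemble coherently, which is exactly where the special ``long unconditional basis'' structure of $c_0(\Gamma)$ and the separable complementation properties it enjoys have to be exploited.
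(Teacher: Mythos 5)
Your overall route is the same as the paper's: reduce via Proposition~\ref{prop:castillo} to showing that the locally complemented subspaces of $c_0(\Gamma)$ are exactly the isomorphs of some $c_0(I)$, observe that local complementation forces $Y$ to be an $\mathcal{L}_\infty$-space, and then invoke a structural theorem for $\mathcal{L}_\infty$-subspaces of $c_0(\Gamma)$. Your bidual argument for the middle step (local complementation $\Rightarrow$ $Y^{**}$ complemented in $\ell_\infty(\Gamma)$ $\Rightarrow$ $Y^{**}$ injective $\Rightarrow$ $Y$ is $\mathcal{L}_\infty$ via local reflexivity) is correct and in fact more explicit than the paper, which simply asserts the implication.

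The gap is exactly where you flag it: the non-separable structural theorem that an infinite-dimensional $\mathcal{L}_\infty$-subspace of $c_0(\Gamma)$ is isomorphic to some $c_0(I)$. The paper does not prove this either; it cites Godefroy--Kalton--Lancien \cite{gkl}, whose structural result is precisely this statement. Your proposed substitute --- transfinite induction along a projectional resolution of the identity, applying the separable case (Alspach/Johnson--Zippin) to each separable piece and ``gluing'' --- is not a proof as written: the separable theorem hands you isomorphisms of the pieces with $c_0$ whose constants and mutual compatibility you cannot control a priori, and without uniform control the transfinite limit need not be an isomorphism; moreover the induction really needs control of the successive quotients $Y_{\alpha+1}/Y_\alpha$ (or complemented splittings of $Y_\alpha$ inside $Y_{\alpha+1}$), not merely of the subspaces $Y_\alpha$, and none of that is automatic from the $\mathcal{L}_\infty$ hypothesis. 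Carrying this out in full amounts to reproving the theorem of \cite{gkl}. The efficient fix is simply to cite that theorem, as the paper does; with that citation in place of your sketch, your argument is complete.
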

\begin{proof}
By the proposition above, $Y$ must be locally complemented in
$c_0(\Gamma)$, hence it must be an $\mathcal{L}_{\infty}$ space,
and therefore \cite{gkl}, it must be isomorphic to some $c_0(I)$.
\end{proof}

Proposition \ref{prop:castillo} easily extends to the multilinear
setting.

\begin{theorem}\label{alphalc}
Let be $j:Y \hookrightarrow X$ and
$\alpha$ a finitely generated $m$-tensor norm. The following
conditions are equivalent:
\begin{enumerate}
  \item $Y$ is locally complemented in $X$ (through $j$).
  \item  $\widetilde{\otimes}_{\alpha}^m Y $ is locally complemented
  in $\widetilde{\otimes}_{\alpha}^m X $ (through $\otimes^m_{\alpha} j$)
  \item $\mathcal{A}_m(Y)$ is complemented in $\mathcal{A}_m(X)$
\end{enumerate}
\end{theorem}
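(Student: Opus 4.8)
The plan is to prove the chain of implications $(1)\Rightarrow(2)\Rightarrow(3)\Rightarrow(1)$, using as the engine Kalton's characterization of local complementation (conditions (1)--(3) after Definition \ref{def:local-comp}) together with the fact that $\alpha$ is finitely generated. The guiding principle is that everything is controlled on finite-dimensional subspaces, where tensor norms behave well and duality is transparent.

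First I would treat $(1)\Rightarrow(2)$. Fix $\lambda$ so that $Y$ is locally $\lambda$-complemented in $X$, and let $G\in\fin(\widetilde{\otimes}_{\alpha}^m X)$. Since $\alpha$ is finitely generated, any element of $\widetilde{\otimes}_{\alpha}^m X$ is approximated (isometrically in the infimum sense) by elements living in $\otimes_{\alpha}^m M_i$ for $M_i\in\fin(X)$; so up to an arbitrarily small perturbation $G$ is contained in the image of $\otimes_{\alpha}^m M$ for a single $M\in\fin(X)$ (take $M$ to contain all the relevant $M_i$). Choose a $\lambda$-retraction $r:M\to Y$ with $r\,j=\mathrm{id}_{Y\cap M}$ — more precisely, by Kalton's condition (3) extend the inclusion $Y\cap M\hookrightarrow M$ backwards, i.e. apply local $\lambda$-complementation to get $r_M:M\to Y$. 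Then $\otimes^m r_M:\otimes_{\alpha}^m M\to \widetilde{\otimes}_{\alpha}^m Y$ has norm $\le\lambda^m$ (multiplicativity of tensor norms under the metric mapping property) and restricts to a retraction onto $\widetilde{\otimes}_{\alpha}^m Y\cap(\text{image of }\otimes^m_\alpha M)$. This exhibits $\widetilde{\otimes}_{\alpha}^m Y$ as locally $\lambda^m$-complemented in $\widetilde{\otimes}_{\alpha}^m X$.

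Next, $(2)\Rightarrow(3)$ is pure duality: by Kalton's equivalence $(1)\Leftrightarrow(2)$, local complementation of $\widetilde{\otimes}_{\alpha}^m Y$ in $\widetilde{\otimes}_{\alpha}^m X$ is equivalent to $(\widetilde{\otimes}_{\alpha}^m Y)^*$ being complemented in $(\widetilde{\otimes}_{\alpha}^m X)^*$ through $(\otimes^m_\alpha j)^*$; and by definition $\mathcal{A}_m(E)=(\widetilde{\otimes}_{\alpha}^m E)^*$, so this is exactly complementation of $\mathcal{A}_m(Y)$ in $\mathcal{A}_m(X)$, the restriction map $\mathcal{A}_m(X)\to\mathcal{A}_m(Y)$ being $(\otimes^m_\alpha j)^*$. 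Finally, for $(3)\Rightarrow(1)$ I would run the argument of Proposition \ref{prop:castillo} in reverse, specialized if desired: a projection $\mathcal{A}_m(X)\to\mathcal{A}_m(Y)$ gives, for every finite-rank operator $T:Y\to \ell_2^k$ (or into any finite-dimensional space), an extension to $X$ with controlled norm, because finite-rank operators can be encoded as $\alpha^*$-integral $m$-forms (e.g. via $x\mapsto \langle x, y^*\rangle^{\,\otimes(m-1)}\otimes(\text{rank-one piece})$, or simply because $\mathcal{A}_m$ contains all finite-type forms with a uniform estimate, $\alpha$ being a reasonable tensor norm); Kalton's condition (3) then yields local complementation of $Y$ in $X$.

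The main obstacle I expect is $(3)\Rightarrow(1)$, specifically producing genuine extensions of \emph{linear} finite-rank operators out of a splitting for \emph{$m$-linear} integral forms: one must check that a finite-rank $T:Y\to F$ can be represented as (a slice of) an element of $\mathcal{A}_m(Y)$ in a way that (a) the splitting of $\mathcal{A}_m$ sends it to an extension that is still "linear in one slot and constant in the others", and (b) the norm of the resulting operator $X\to F$ is bounded by a constant times $\|T\|$. For $m=2$ this is exactly the known equivalence (3)$\Rightarrow$(1) of Proposition \ref{prop:castillo}; for general $m$ the same device works after freezing $m-2$ variables at fixed functionals, and the finite generation of $\alpha$ together with the fact that $\mathcal{A}_m$ dominates the injective norm $\varepsilon$ (hence contains integral — in particular finite-rank — forms with uniform constants) supplies the needed estimates. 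Once the norm bookkeeping is in place the theorem follows.
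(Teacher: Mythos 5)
Your proposal is essentially correct, and two of its three steps coincide with the paper's: the implication $(1)\Rightarrow(2)$ is proved exactly as you do (contain a finite-dimensional $F\subseteq\widetilde{\otimes}_{\alpha}^m X$ in a tensor product of finite-dimensional subspaces, tensor the retractions, use the metric mapping property; the paper is just as cavalier as you are about the perturbation needed to pass from the completion to algebraic tensors), and the equivalence $(2)\Leftrightarrow(3)$ is, as you say, nothing but Kalton's duality $(1)\Leftrightarrow(2)$ applied to $\widetilde{\otimes}_{\alpha}^m Y\subseteq\widetilde{\otimes}_{\alpha}^m X$, since $\mathcal{A}_m(E)=(\widetilde{\otimes}_{\alpha}^m E)^*$. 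Where you diverge is in closing the cycle: the paper proves $(2)\Rightarrow(1)$ by invoking the abstract hereditary fact that if $Y$ is locally complemented in $X$ and $Y_0\subseteq Y$, $X_0\subseteq X$ are complemented subspaces with $Y_0\subseteq X_0$, then $Y_0$ is locally complemented in $X_0$ (implicitly viewing $Y$ and $X$ as complemented ``slices'' $Y\otimes y_0^{\otimes(m-1)}$, $X\otimes y_0^{\otimes(m-1)}$ of the respective tensor products), whereas you prove $(3)\Rightarrow(1)$ directly by freezing variables in the linearly extended $\alpha^*$-integral forms and reading off a linear extension operator, then applying Kalton. Your route is valid and more self-contained (it is the multilinear analogue of the $m=2$ argument behind Proposition \ref{prop:castillo}), at the cost of the norm bookkeeping you flag: one needs $\varepsilon\le\alpha\le\pi$ so that product forms $y^*\otimes y_0^*\otimes\cdots\otimes y_0^*$ lie in $\mathcal{A}_m(Y)$ with norm $\le\|y^*\|$ and so that the $\mathcal{A}$-norm of the extension dominates its sup norm; both hold for a tensor norm in the Defant--Floret sense, and note that it suffices to extend functionals (rank one), freezing $m-1$ variables, rather than general finite-rank operators. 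The paper's closing step buys brevity and avoids any norm comparison with $\varepsilon$ and $\pi$; yours buys an explicit extension-operator mechanism and quantitative constants.
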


\begin{proof}
$(1)\Rightarrow(2)$ Take $F\in \fin({\otimes}_{\alpha}^m
X)$ and notice that $F\subseteq \otimes^m F_i$ with $F_i\in
\fin(X)$ . Consider for each $i$ a retraction $r_{F_i}:F_i\to Y$
and thus over $F$ we have the retraction $\otimes^m r_{F_i}$
(restricted to F).

$(2)\Rightarrow(1)$  follows from the general fact: ``if $Y$ is
locally complemented in $X$ and $Y_0\subseteq Y$, $X_0\subseteq X$
are complemented subspaces such that $Y_0\subseteq X_0$ then $Y_0$
is locally complemented in $X_0$.''

$(2)\Leftrightarrow(3)$ is exactly the equivalence
$(1)\Leftrightarrow(2)$ in \cite[Theorem. 3.5]{K}.
\end{proof}

The proof of $(1)\Rightarrow(2)$ yields that when $Y$ is
$\lambda$-locally complemented in $X$, then every
$\alpha$-integral $m$-form $T$ on $Y$ can be extended to an
$\alpha$-integral $m$-form $\widetilde{T}$ on $X$ with
$\|\widetilde{T}\|\leq \lambda^{m+1}\|T\|$. This condition of
extension with control of the norms was recently used in
\cite{manolo} and is the right condition to deal with extensions
of holomorphic functions, see \cite{manolo}. On the other hand
taking an uncomplemented copy of $\ell_1$ inside $\ell_1$, see
 \cite{bourgain}, one can see that this extension with control of the norms is not equivalent to local
complementation.\\

There is a partial converse to Proposition \ref{prop:castillo},
which can be understood as the local version of the classical
Lindentrauss-Tzafriri theorem \cite{litz}. It essentially appears
in Fakhoury \cite[Thm. 3.7]{fa}, implicitly in \cite[Lemma
12.4.3]{alka}) and has been recently and independently obtained by
Fern\'andez Unzueta and Prieto \cite[Thm. 2.1]{fu}. For the sake
of completeness we include a clean draft for the proof.

\begin{theorem} \label{thm:Hilbert}
A Banach space $X$ is a Hilbert space if and only if every
subspace is locally complemented .
\end{theorem}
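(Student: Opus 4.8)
The plan is to prove the non-trivial direction: if every subspace of $X$ is locally complemented, then $X$ is isomorphic to a Hilbert space. The natural route is via the characterization recalled in the preliminaries — a Banach space with both type $2$ and cotype $2$ is isomorphic to a Hilbert space (Kwapień's theorem) — so it suffices to establish that $X$ has type $2$ and cotype $2$. The hypothesis feeds into this through Kalton's equivalence from Definition \ref{def:local-comp}: local complementation of $Y$ in $X$ with constant $\lambda$ is the same as saying every finite-rank operator on $Y$ extends to $X$ with norm at most $\lambda$ times the original. A preliminary reduction by the closed graph theorem (or a gliding-hump / uniform boundedness argument on finite-dimensional subspaces) gives a single constant $\lambda$ that works for all subspaces simultaneously — this uniformization is a small but essential first step.

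Next I would extract the key consequence: every operator into $\ell_2$ defined on any subspace $E\subseteq X$ extends to all of $X$ with a uniform norm bound. For finite-rank operators this is immediate from Kalton's (3); for general bounded operators $T:E\to\ell_2$ one approximates $T$ by its compositions with finite-rank projections $P_n$ on $\ell_2$, extends each $P_nT$ with norm $\le\lambda\|T\|$, and passes to a weak-operator limit (the unit ball of $\mathcal L(X,\ell_2)$ being weak$^*$-compact in an appropriate sense, using that $\ell_2$ is reflexive). This is precisely the Maurey extension property (MEP) discussed right before Theorem \ref{maurey}. Symmetrically, applying the hypothesis to quotients — or rather using that local complementation of $Y$ in $X$ is equivalent to $Y^*$ being complemented in $X^*$, condition (2) of Kalton's list — one obtains a dual "lifting" property: every operator from $X$ onto a subspace, restricted suitably, factors compatibly through $\ell_2$-targets.

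The heart of the argument is then to convert these two-sided extension/lifting properties into type $2$ and cotype $2$. Here the cleanest path is the factorization machinery: MEP for $X$ says that the identity on any finite-dimensional $E\subseteq X$, viewed through any norm-one map to $\ell_2^n$, extends; taking $E$ to be an arbitrary $\ell_2^n$ sitting in $X$ and the identity map shows every such copy is uniformly complemented, which by Kadec–Pełczyński / Figiel–Lindenstrauss–Milman type estimates forces control on the Euclidean structure. More directly, one invokes the known fact (Maurey, via MEP) that $X$ then satisfies a weak-type-$2$ estimate, and dually a weak-cotype-$2$ estimate; but one needs the full (not weak) versions, and this is where applying the hypothesis to \emph{all} subspaces — not just $X$ itself — does the work, since weak type $2$ for every subspace of $X$ upgrades to type $2$ for $X$ by an iteration/localization argument (this is essentially the content of the Milman–Pisier circle of ideas cited as \cite{MilmanPisier}). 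I expect \textbf{this last upgrade — from the weak, per-subspace estimates to genuine type $2$ and cotype $2$ for $X$ — to be the main obstacle}, and the place where the proof must either quote a substantial local-theory result or carefully exploit that the extension hypothesis is inherited by all subspaces at once. Once type $2$ and cotype $2$ are in hand, Kwapień's theorem closes the argument; the converse direction is trivial since a subspace of a Hilbert space is orthogonally, hence $1$-locally, complemented.
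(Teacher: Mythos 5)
Your opening step (a single uniform constant $\lambda$ for the local complementation of all subspaces, obtained by a gliding-hump/uniform-boundedness argument on finite-dimensional pieces) is essentially the first half of the paper's proof; note only that the closed graph theorem does not apply directly here (there is no single linear map whose graph one could close over all subspaces), and the paper in fact carries out an explicit construction: using the Davis--Dean--Singer lemma it builds disjoint finite-dimensional subspaces $E_n$ with $\lambda_X(E_n)\ge n$ forming a Schauder decomposition of a subspace $Y=\overline{\Sigma E_n}$ with coordinate projections of norm $\le 4$, so that $Y$ could not be locally complemented --- contradiction.

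The second half of your proposal, however, has a genuine gap, and it is exactly the step you flag yourself. From the uniformized hypothesis you pass to the Maurey extension property and hence (Milman--Pisier) to weak type $2$, and then propose to upgrade ``weak type $2$ for every subspace'' to type $2$ (and dually to cotype $2$) so as to invoke Kwapie\'n. This upgrade cannot work: weak type $2$ is inherited by every subspace automatically, so the hypothesis ``every subspace of $X$ has weak type $2$'' carries no information beyond weak type $2$ of $X$ itself, and the paper explicitly recalls that weak type $2$ does not imply type $2$; moreover you never actually produce any cotype $2$ (or even weak cotype $2$) estimate, since MEP only concerns extensions of operators \emph{into} $\ell_2$. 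The missing idea is much more direct and is what the paper does: once the constant $\lambda$ is uniform, apply Kalton's criterion (3) to the \emph{identity} of an arbitrary finite-dimensional subspace $E\subset X$ --- a finite-rank operator --- to obtain a projection of $X$ onto $E$ of norm $\le\lambda$. Thus \emph{every} finite-dimensional subspace of $X$ is uniformly complemented, and the conclusion follows from the final (quantitative) step of the Lindenstrauss--Tzafriri solution of the complemented subspace problem \cite[Thm 12.4.2]{alka}, which says precisely that this forces $X$ to be isomorphic to a Hilbert space. Your weaker observation that the copies of $\ell_2^n$ inside $X$ are uniformly complemented does not suffice for this; one needs uniform complementation of \emph{all} finite-dimensional subspaces, which the hypothesis hands you for free.
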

\begin{proof}
For the non-trivial implication let us take an infinite
dimensional space $X$ such that all of its subspaces are locally
complemented. Let us define $\lambda _{X}(Y)$ as the infimum of
the local complementation constants of $Y$ in $X$ and
\begin{equation*}
\lambda _{s}(X)=\sup \{\lambda _{X}(Y):Y\subset X\}\; .
\end{equation*}
Assume that $\lambda _{s}(X)=\infty $. First, is clear that for every subspace $Y$  of finite  codimension  $\lambda _{s}(Y)=\infty $.

Let $E_{1}$ be a finite dimensional subspace of $X$ with $\lambda
_{X}(E_{1})\geq 1$. Using a standard argument a local theory (see,
\cite[Theorem 1]{dadesi} or \cite[Lemma 12.4.3]{alka}) there is a
finite codimensional subspace $Y_{1}$  such that the natural
projection of $E_{1}\oplus Y_{1}$ to $E_{1}$ has norm $\leq 2$.


We now inductively construct a sequence of disjoint finite
dimensional subspaces $(E_{n})$ and finite codimensional subspaces
$(Y_{n})$ such that
 $$E_{n+1}\subset Y_{n} ; ~~
 Y_{n+1}\subset Y_{n}, ~~  \lambda _{X}(E_{n})\geq n$$
 and such
that the projection constant from $E_{1}\oplus \cdots \oplus
E_{n}\oplus Y_{n}$ to $E_{1}\oplus \cdots \oplus E_{n}$ has norm
$\leq 2$. Let $Y$ be the following subspace of $X$: $$Y =\{\sum
e_{n}: e_{n}\in E_{n}$$ and $\sum e_{n}$ converges in $X\}$. $Y$
is a subspace of $X$ and has the Schauder decomposition $(E_{n})$.
It is clear that the natural projection of $Y$ to $E_{n}$ has norm
at most $4$, which implies that $Y$ cannot be locally complemented
in $X$.

All this together means that we have shown that $\lambda
_{s}(X)<\infty $. In particular, this implies that there is a
uniform constant --$\lambda_s(X)$-- co that all finite dimensional
subspaces of $X$ are $\lambda_s(X)$-complemented in $X$. What
remains is to follow the final step in the proof of the
Lindenstrauss-Tzafriri classical theorem; see e.g. \cite[Thm
12.4.2]{alka}.\end{proof}

This characterization should be compared with the type of results
in \cite{fa}.

\begin{corollary}
There is no Banach space $X$ such that every infinite dimensional
subspace is an $\mathcal{L}_\infty$-space.
\end{corollary}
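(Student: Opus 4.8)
The plan is to derive this corollary directly from the Hilbert space characterization in \thmref{thm:Hilbert}. Suppose, for contradiction, that there exists a Banach space $X$ all of whose infinite dimensional subspaces are $\mathcal{L}_\infty$-spaces. The first step is to observe that $X$ itself is infinite dimensional and hence an $\mathcal{L}_\infty$-space; in particular $X$ is not isomorphic to a Hilbert space, since $\mathcal{L}_\infty$-spaces have cotype $\infty$ (they contain uniform copies of $\ell_\infty^n$) and hence fail cotype $2$, whereas Hilbert spaces have cotype $2$. Actually it is cleaner to note that an infinite dimensional $\mathcal{L}_\infty$-space cannot have type $2$ either, so it is genuinely non-Hilbertian.

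Next I would invoke \thmref{thm:Hilbert}: since $X$ is not a Hilbert space, there exists a subspace $Y \subseteq X$ that is \emph{not} locally complemented in $X$. Such a $Y$ must be infinite dimensional, because every finite dimensional subspace is (locally) complemented. Therefore, by the standing hypothesis, $Y$ is itself an $\mathcal{L}_\infty$-space. But $\mathcal{L}_\infty$-spaces are locally complemented in every superspace containing them — this is precisely the fact recorded just after \prettyref{prop:castillo} in the excerpt, namely that $\mathcal{L}_\infty$-spaces are locally complemented in every space in which they sit. This contradicts the choice of $Y$, and the corollary follows.

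The only point requiring a little care is the assertion that an $\mathcal{L}_\infty$-space is locally complemented in any superspace. This is classical: if $E \subseteq Y$ is finite dimensional, enlarge it within the $\mathcal{L}_\infty$-space $Y$ to a finite dimensional $F$ with $d(F,\ell_\infty^{\dim F}) \le \lambda$; since $\ell_\infty^k$-spaces are $1$-injective (isometrically complemented in every superspace via the Hahn-Banach theorem applied coordinatewise), $F$ is $\lambda$-complemented in $X$ by a projection $P_F$, and $r_F := P_F|_X$ composed with the inclusion data gives a uniformly bounded retraction $X \to Y$ on $F$; invoking Kalton's characterization (Definition~\ref{def:local-comp} and the subsequent equivalences) yields local complementation with a uniform constant. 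I do not expect any genuine obstacle here — the proof is a short pigeonholing of \thmref{thm:Hilbert} against the injectivity properties of $\mathcal{L}_\infty$-spaces — the main thing to get right is simply that a non-locally-complemented subspace is forced to be infinite dimensional so that the hypothesis applies to it.
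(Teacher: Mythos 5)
Your argument is correct and is essentially the paper's (implicit) one: the corollary is exactly Theorem~\ref{thm:Hilbert} combined with the two standard facts that an $\mathcal{L}_\infty$-space is locally complemented in any superspace (quoted in the paper) and that an infinite dimensional $\mathcal{L}_\infty$-space is not isomorphic to a Hilbert space, and your contrapositive arrangement (non-Hilbert $\Rightarrow$ some, necessarily infinite dimensional, subspace fails to be locally complemented, contradicting its being $\mathcal{L}_\infty$) is the same argument. One cosmetic caveat: in your closing sketch of the classical fact you should, for a finite dimensional subspace $F$ of $X$, extend the inclusion $F\cap Y\hookrightarrow Y$ to $F$ using the $1$-injectivity of $\ell_\infty^k$ applied to a $\lambda$-isomorphic enlargement of $F\cap Y$ inside $Y$, rather than produce projections onto enlargements of finite dimensional subspaces of $Y$ (Kalton's criterion concerns finite dimensional subspaces of $X$, or finite rank operators on $Y$); since the paper itself simply invokes this known fact, this does not affect the validity of your proof.
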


\begin{corollary} Given any Banach, non-Hilbert, space $X$ there always exist a weak*-closed uncomplemented subspace of $X^*$.
\end{corollary}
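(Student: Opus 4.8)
The statement to prove is: \emph{Given any Banach, non-Hilbert, space $X$ there always exists a weak*-closed uncomplemented subspace of $X^*$.} The plan is to deduce this directly from Theorem~\ref{thm:Hilbert} together with Kalton's characterization of local complementation recalled just before Definition~\ref{def:local-comp}. Since $X$ is not a Hilbert space, Theorem~\ref{thm:Hilbert} produces a subspace $Y\subset X$ that is \emph{not} locally complemented in $X$. Consider the inclusion $i\colon Y\iny X$ and the induced short exact sequence $0\to Y\to X\to X/Y\to 0$. Dualizing gives the exact sequence
\begin{equation*}
0\to (X/Y)^*\xrightarrow{q^*} X^*\xrightarrow{i^*} Y^*\to 0,
\end{equation*}
and the subspace I will exhibit is $Z^*:=q^*\bigl((X/Y)^*\bigr)=Y^\perp\subseteq X^*$, the annihilator of $Y$. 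This is automatically weak*-closed, being an annihilator.

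Next I would argue it is uncomplemented. By the equivalences of Kalton recalled in the excerpt, ``$Y$ locally complemented in $X$'' is equivalent to ``$Y^*$ is complemented in $X^*$ through $i^*$'', which in turn (the excerpt states this explicitly: local complementation of $Y$ in $X$ holds iff the dual sequence $0\to Z^*\to X^*\to Y^*\to 0$ splits) is equivalent to the splitting of the displayed dual sequence, i.e. to $Y^\perp=q^*((X/Y)^*)$ being complemented in $X^*$. Since $Y$ was chosen \emph{not} locally complemented, this dual sequence does \emph{not} split, so $Y^\perp$ is not complemented in $X^*$. Hence $Y^\perp$ is the desired weak*-closed uncomplemented subspace of $X^*$.

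The only genuine content here is Theorem~\ref{thm:Hilbert}, which has already been proved in the excerpt, and Kalton's theorem, which is quoted; so there is essentially no obstacle beyond being careful about one edge case. If $X$ is finite-dimensional it is trivially Hilbertian, so the hypothesis forces $X$ infinite-dimensional, and then $Y$ can be taken infinite-dimensional and infinite-codimensional as produced in the proof of Theorem~\ref{thm:Hilbert} (the $Y$ with Schauder decomposition $(E_n)$); in particular $Y^\perp\neq 0$ and $Y^\perp\neq X^*$, so it is a proper nonzero subspace, which is what one wants in order for ``uncomplemented'' to be a meaningful assertion. The main point to state cleanly is just the chain: non-Hilbert $\Rightarrow$ some $Y$ not locally complemented (Theorem~\ref{thm:Hilbert}) $\Rightarrow$ dual sequence does not split (Kalton) $\Rightarrow$ $Y^\perp$ weak*-closed and uncomplemented in $X^*$.
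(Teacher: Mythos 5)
Your argument is correct and is exactly the intended derivation: the paper states this corollary without proof as an immediate consequence of Theorem~\ref{thm:Hilbert} combined with Kalton's equivalence (non-locally-complemented $Y$ $\Leftrightarrow$ non-splitting of $0\to Y^\perp\to X^*\to Y^*\to 0$ $\Leftrightarrow$ $Y^\perp$ uncomplemented), and $Y^\perp$ is weak*-closed as an annihilator. Nothing is missing.
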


\section{Localization with respect to other tensor norms}

It is part of the common believing that the extension of
multi-linear forms is a {\it local} phenomenon. A different thing
is to precise in which sense or senses this is true. Here we
present a natural generalization of an idea presented in
\cite{cagajipro}. We need to introduce some definitions.

By a normed ideal of $n$-linear continuous operators between
Banach spaces (\cite{pie}) we mean a pair $(\mathcal{A},
\|\cdot\|_{\mathcal{A}})$ such that
\begin{enumerate}
    \item $\mathcal{A}(E_1,...,E_n;F)=\mathcal{A}\cap
    \mathcal{L}(E_1,...,E_n;F)$ is linear and
    $\|\cdot\|_{|\mathcal{A}(E_1,...,E_n;F)}$ is a norm.
    \item If $T_j\in \mathcal{L}(G_j;E_j),
    \varphi\in\mathcal{A}(E_1,...,E_n;F)$ and $S\in
    \mathcal{L}(F;G)$ then the composition $S\circ\varphi\circ
    (T_1,...,T_n)$ is in $\mathcal{A}$ and $\|S\circ\varphi\circ
    (T_1,...,T_n)\|_{\mathcal{A}}\leq \|S\|\|\varphi\|_{\mathcal{A}}\|T_1\|...\|T_n\|.$
    \item The map $\{\mathbb{K}^n\ni (x_1,...,x_n)\to x_1\cdot\cdot\cdot x_n \in
    \mathbb{K}\}$ belongs to $\mathcal{A}$ with corresponding
    $\mathcal{A}$-norm equal 1.
\end{enumerate}

An ideal $(\mathcal{A}, \|\cdot\|_{\mathcal{A}})$ is called
maximal if the condition

\begin{center}
$\|\varphi\|_{\mathcal{A}^{\max}}:=\sup \{ \|Q_L^F \circ
\varphi_{|M_1\times...\times M_n}\|_{\mathcal{A}}\;\;:\;\;M_j\in
\fin(E_j),L\in \cofin(F)  \}<\infty$
\end{center}
implies that $\varphi \in \mathcal{A}$ and
$\|\varphi\|_{\mathcal{A}}=\|\varphi\|_{\mathcal{A}^{\max}}$
holds (where $Q_L^F:F\to L$ is the projection). Given an ideal $(\mathcal{A}, \|\cdot\|_{\mathcal{A}})$,
its maximal envelope is defined as the linear set
$\mathcal{A}^{\max}:=\{ \varphi\in
\mathcal{M}(E_1,...E_n):\;\;\|\varphi\|_{\mathcal{A}^{\max}}<\infty
\}$ endowed with the norm $\|\cdot\|_{\mathcal{A}^{\max}}$.\

A subclass $\mathcal{Q}\subset \mathcal{P}^n$ of $n$-homogeneous
continuous scalar-valued polynomials on Banach spaces is called an
ideal if
\begin{enumerate}
    \item $\mathcal{Q}(E)=\mathcal{P}^n\cap
    \mathcal{Q}$ is a linear subspace of $\mathcal{P}^n(E)$ for all Banach spaces $E$.
    \item If $T\in \mathcal{L}(E;F),
    q\in\mathcal{Q}(F)$ then $q\circ T \in \mathcal{Q}$.
    \item The map $\{\mathbb{K}\ni z \to z^n \in
    \mathbb{K}\}$ belongs to $\mathcal{Q}$.\\
If $\|\cdot\|_{\mathcal{Q}}:\mathcal{Q}\to [0,\infty]$
    satisfies\

\item $\|\cdot\|_{\mathcal{Q}}|_{\mathcal{Q}(E)}$ is a norm for
all Banach spaces $E$.
    \item $\| q\circ T\|_{\mathcal{Q}}\leq
    \|T\|^n\|q\|_{\mathcal{Q}}$ in the situation of (2).
    \item $\|\{\mathbb{K}\ni z \to z^n \in
    \mathbb{K}\}\|_{\mathcal{Q}}=1$,\
\end{enumerate}
then $(\mathcal{Q},\|\cdot\|_{\mathcal{Q}})$ is called a normed
ideal of $n$-homogeneous polynomials. For
$(\mathcal{Q},\|\cdot\|_{\mathcal{Q}})$ and $q\in \mathcal{P}^n(E)$
define $\|q\|_{Q_{\max}}:=\sup\{\|q|_M\|_{\mathcal{Q}}\;\;|M\in
\fin(E) \}\in [0,\infty].$ $(\mathcal{Q},\|\cdot\|_{\mathcal{Q}})$
is called maximal if every $q\in \mathcal{P}^n(E)$ with
$\|q\|_{Q_{\max}}<\infty$ is in $\mathcal{Q}$ and
$\|q\|_{Q}=\|q\|_{Q_{\max}}$. Given an ideal $(\mathcal{Q},
\|\cdot\|_{\mathcal{Q}})$, we define its maximal envelope, as
before, like $\mathcal{Q}^{\max}:=\{ q\in
\mathcal{P}^n(E):\;\;\|q\|_{\mathcal{Q}_{\max}}<\infty \}$ endowed
with the norm $\|\cdot\|_{\mathcal{Q}_{\max}}$.
$(\mathcal{Q},\|\cdot\|_{\mathcal{Q}})$ is called ultrastable if for every ultrafilter $\mathfrak{U}$ the following holds:
For $q_{\iota}\in \mathcal{Q}(E_{\iota})$ with $\sup_{\iota\in
I}\|q_{\iota}\|_{\mathcal{Q}}<\infty$ one has
$\lim_{\mathfrak{U}}q_{\iota}\in
\mathcal{Q}((E_{\iota})_{\mathfrak{U}})$ and
$\|\lim_{\mathfrak{U}}q_{\iota} \|_{\mathcal{Q}}\leq sup
\|q_{\iota}\|_{\mathcal{Q}}$ (and hence $\leq
\lim_{\mathfrak{U}}\|q_{\iota}\|_{\mathcal{Q}}$).\\

\begin{definition}\label{alphaext}
Let $j_i:Y_i \to X_i$ be embeddings, $i=1,...,n$, and $\alpha$ a finitely
generated tensor norm. We say that the multi-linear mapping
$\varphi:\prod Y_i \to \mathbb{K}$ admits a \textbf{local
$\alpha$-extension} to $\prod X_i$ through $\prod j_i$ if there
exists a constant $\lambda$ such that $\forall E_i \in
\fin(Y_i),F_i\in \fin(X_i)$ we have that, for $i_{E}:\prod E_i
\to \prod Y_i$, $\varphi i_E:\prod E_i \to \mathbb{K}$ admits an
extension $\widetilde{\varphi} :\prod (E_i+F_i) \to \mathbb{K}$
through $(j_i)_{i=1}^m$ with $\|\widetilde{\varphi}\|_{\alpha^*}\leq
\lambda \|\varphi i_E\|_{\alpha^*}$. Equivalently, $L_{\varphi i_E}
\in(\widetilde{\otimes}_{\alpha}^m E_i)^*$ admits an extension
$L_{\widetilde{\varphi}}\in(\widetilde{\otimes}_{\alpha}^m
(E_i+F_i))^*$  with $\|L_{\widetilde{\varphi}}\|_{\mathcal{A}}\leq
\lambda \|L_{\varphi i_E}\|_{\mathcal{A}}$.
\end{definition}

For the quantitative version we shall say $\varphi$ admits a local
$(\alpha,\lambda)$-extension. As in \cite{cagajipro} local
extension gives full extension.

\begin{theorem}\label{rootns}
Let $j_i:Y_i \iny X_i$ be an isometric embedding for $i=1,...,m$ and
$\alpha$ a finitely generated tensor norm. For a given multilinear
map $\varphi:\prod Y_i \to \mathbb{K}$ with
$\|\varphi\|_{\alpha^*}<\infty$, the following conditions are
equivalent:
\begin{enumerate}
    \item $\varphi$ admits a local
$(\alpha,\lambda)$-extension to $\prod X_i$.
    \item $\varphi$ admits a full extension
to $\prod X_i$, say $\widetilde{\varphi}$, with
$\|\widetilde{\varphi}\|_{\alpha^*}\leq
\lambda\|\varphi\|_{\alpha^*}$.
\end{enumerate}
  \end{theorem}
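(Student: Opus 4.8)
The plan is to prove $(2)\Rightarrow(1)$ by a one-line restriction argument and to obtain $(1)\Rightarrow(2)$ by gluing the given local extensions along an ultrafilter, in the spirit of the bilinear argument of \cite{cagajipro}; the quantitative control of the $\alpha^*$-norm of the glued form is where the hypothesis that $\alpha$ is finitely generated (equivalently, that the ideal $\mathcal{A}$ is maximal, cf.\ \cite{FH}) is used. For $(2)\Rightarrow(1)$: if $\widetilde{\varphi}$ is a full extension of $\varphi$ with $\|\widetilde{\varphi}\|_{\alpha^*}\le\lambda\|\varphi\|_{\alpha^*}$, then for any $E_i\in\fin(Y_i)$ and $F_i\in\fin(X_i)$ the restriction $\widetilde{\varphi}|_{\prod_i(E_i+F_i)}$ extends $\varphi i_E$, and its $\alpha^*$-norm does not exceed $\|\widetilde{\varphi}\|_{\alpha^*}$ by the metric mapping property of $\mathcal{A}$ (composing with the inclusions, which have norm $\le1$, cannot increase the $\mathcal{A}$-norm); hence $\varphi$ admits a local $\alpha$-extension.

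For $(1)\Rightarrow(2)$, the construction is as follows. Let $I$ be the directed set of tuples $\nu=\bigl((E_i)_i,(F_i)_i\bigr)$ with $E_i\in\fin(Y_i)$ and $F_i\in\fin(X_i)$, ordered coordinatewise by inclusion, and set $V_i^\nu:=E_i+F_i\in\fin(X_i)$. By (1) fix, for each $\nu$, an extension $\psi_\nu\colon\prod_iV_i^\nu\to\mathbb{K}$ of $\varphi i_E$ with $\|\psi_\nu\|_{\alpha^*}\le\lambda\|\varphi i_E\|_{\alpha^*}\le\lambda\|\varphi\|_{\alpha^*}$, the last inequality being again the metric mapping property (finite because $\|\varphi\|_{\alpha^*}<\infty$). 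Since the ordinary norm of a multilinear form is dominated by its $\mathcal{A}$-norm, also $\|\psi_\nu\|\le\lambda\|\varphi\|_{\alpha^*}$. Pick an ultrafilter $\mathfrak{U}$ on $I$ refining the filter of tails $\{\nu:\nu\ge\nu_0\}$. For $x=(x_1,\dots,x_m)\in\prod_iX_i$ there is $\nu_0$ with $x\in\prod_iV_i^\nu$ whenever $\nu\ge\nu_0$, so $\bigl(\psi_\nu(x)\bigr)_\nu$ is defined on a member of $\mathfrak{U}$ and is bounded by $\lambda\|\varphi\|_{\alpha^*}\prod_i\|x_i\|$; put $\widetilde{\varphi}(x):=\lim_{\mathfrak{U}}\psi_\nu(x)$. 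Multilinearity and this bound pass to the ultralimit, so $\widetilde{\varphi}$ is a bounded $m$-linear form on $\prod_iX_i$; and it extends $\varphi$, because for $x\in\prod_iY_i$ one has $x_i\in E_i$ for $\nu$ beyond some $\nu_0$, and then $\psi_\nu(x)=\varphi(x)$.

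The crux is then to check that $\widetilde{\varphi}$ is $\alpha^*$-integral with $\|\widetilde{\varphi}\|_{\alpha^*}\le\lambda\|\varphi\|_{\alpha^*}$. First, for any fixed $M_i\in\fin(X_i)$ one has $\|\widetilde{\varphi}|_{\prod_iM_i}\|_{\alpha^*}\le\lambda\|\varphi\|_{\alpha^*}$: choosing $\nu_0$ with $M_i\subseteq V_i^{\nu_0}$, for all $\nu\ge\nu_0$ the linearizations $L_{\psi_\nu|_{\prod_iM_i}}$ lie in the closed ball of radius $\lambda\|\varphi\|_{\alpha^*}$ of the finite-dimensional space $(\widetilde{\otimes}_{\alpha}^m M_i)^*$, and on a fixed finite-dimensional tensor product pointwise convergence is norm convergence, so the $\mathfrak{U}$-limit $L_{\widetilde{\varphi}|_{\prod_iM_i}}$ remains in that ball. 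Now let $z\in\otimes_{\alpha}^m X_i$ be a finite tensor with $\|z\|_{\otimes_{\alpha}^m X_i}\le1$; since $\alpha$ is finitely generated there exist $M_i\in\fin(X_i)$ with $z\in\otimes_{\alpha}^m M_i$ and $\|z\|_{\otimes_{\alpha}^m M_i}\le1+\varepsilon$, whence $|L_{\widetilde{\varphi}}(z)|=|L_{\widetilde{\varphi}|_{\prod_iM_i}}(z)|\le\lambda\|\varphi\|_{\alpha^*}(1+\varepsilon)$. Letting $\varepsilon\downarrow0$ shows that $L_{\widetilde{\varphi}}$ extends to a continuous functional on $\widetilde{\otimes}_{\alpha}^m X_i$ of norm $\le\lambda\|\varphi\|_{\alpha^*}$, that is $\|\widetilde{\varphi}\|_{\alpha^*}\le\lambda\|\varphi\|_{\alpha^*}$.

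I expect the main obstacle to be precisely this last norm bound: exhibiting \emph{some} bounded multilinear extension is routine, but controlling the \emph{$\alpha^*$-integral} norm forces one to localize the estimate to the finite-dimensional sections $\prod_iM_i$ -- this is where the finite generation of $\alpha$ (maximality of $\mathcal{A}$) is essential -- and then to carry the finite-dimensional bounds through the ultralimit, which is legitimate only because the relevant balls of $(\widetilde{\otimes}_{\alpha}^m M_i)^*$ are closed. The bookkeeping around the directed set $I$, the multilinearity of $\widetilde{\varphi}$, and the fact that $\widetilde{\varphi}$ restricts to $\varphi$ is routine.
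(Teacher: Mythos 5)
Your proof is correct, and it reaches the conclusion by a genuinely more elementary route than the paper's. The paper also glues the local extensions along an ultrafilter, but it does so at the level of \emph{spaces}: it forms the Banach-space ultraproduct $\prod_i(E_{i,\iota}+F_{i,\iota})_{\mathfrak U_i}$, invokes the Floret--Hunfeld theorem that the ideal $\mathcal{A}=(\widetilde{\otimes}_{\alpha}^m\cdot)^*$ associated to a finitely generated tensor norm is maximal \emph{and ultrastable}, concludes that the ultralimit form $\overline{(\varphi_\iota)}_{\mathfrak U}$ lies in $\mathcal{A}$ of the ultraproduct with norm $\le\lambda\|\varphi\|_{\alpha^*}$, and then produces $\widetilde{\varphi}$ by composing with an isometric embedding $I:\prod X_i\hookrightarrow\prod(E_{i,\iota}+F_{i,\iota})_{\mathfrak U_i}$ (checking that $I\circ(j_i)=J$ so that the composition really extends $\varphi$). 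You instead take the pointwise ultralimit $\widetilde{\varphi}(x)=\lim_{\mathfrak U}\psi_\nu(x)$ directly on $\prod X_i$ and recover the $\alpha^*$-estimate by hand: restrict to arbitrary $M_i\in\fin(X_i)$, pass the bound through the ultralimit using compactness of balls in the finite-dimensional duals $(\widetilde{\otimes}_{\alpha}^m M_i)^*$, and then use finite generation of $\alpha$ to globalize -- which is in effect a self-contained proof of the maximality of $\mathcal{A}$ in the only instance needed. What your version buys is independence from the ultraproduct formalism and from the ultrastability half of the Floret--Hunfeld theorem, as well as a transparent verification that $\widetilde{\varphi}$ really restricts to $\varphi$ (which in the paper is hidden in the somewhat delicate definitions of the embeddings $J$ and $I$); what the paper's version buys is that, once ultrastability is quoted, the norm estimate is immediate from the ideal property, and the same scheme transfers verbatim to the polynomial and multilinear-ideal variants (Theorem \ref{rootns1}) where the authors reuse it. Two minor housekeeping points in your write-up: the domination $\|\psi_\nu\|\le\|\psi_\nu\|_{\alpha^*}$ uses that $\alpha\le\pi$, i.e.\ the standing convention that tensor norms lie between $\varepsilon$ and $\pi$, and is worth saying; and your $(2)\Rightarrow(1)$ direction, which the paper omits entirely, is correctly reduced to the metric mapping property.
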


\begin{proof} For each $i\in \{1,\ldots,m\}$ take an ultrafilter $\mathfrak U_i$
dominating the filter induced by the natural order on the couples
$(E_i,F_i)$ with $E_i\in \fin(Y_i), F_i\in \fin(X_i)$  and
consider the ultrafilter $\mathfrak U=\prod_{i=1}^m \mathfrak U_i$
on $I:=\prod_{i=1}^m \left(\fin(Y_i),\fin(X_i) \right)$. Thanks
to Theorem 3.4 in \cite{FH} we know that the ideal
$\mathcal{A}(Y_1,...,Y_m):=(\widetilde{\otimes}_{\alpha}^m Y_i)^*$
is maximal and ultrastable. For our fixed $\varphi:\prod Y_i \to
\mathbb{K}$ and for each $\iota \in I$,
$\iota=\prod_{i=1}^m(E_i,F_i)$, we have that $\varphi i_E$, where
$i_E:\prod E_i \hookrightarrow \prod Y_i$ is the natural embedding,
admits an extension $\varphi_{\iota}:\prod_{i=1}^m (E_i+F_i)\to
\mathbb{K}$ such that $\varphi_{\iota}\in \mathcal{A}$ and
$\|\varphi_{\iota}\|_{\alpha^*}\leq \lambda \|\varphi
i_E\|_{\alpha^*}\leq \lambda \|\varphi\|_{\alpha^*}$. Obviously
$\sup_{\iota \in I}\|\varphi_{\iota}\|_{\alpha^*}\le \lambda
\|\varphi\|_{\alpha^*}<\infty$ (and consequently $\sup_{\iota \in
I}\|L_{\varphi_{\iota}}\|_{\mathcal{A}}\le \lambda
\|L_{\varphi}\|_{\mathcal{A}}<\infty$).  By the ultrastability of
$\mathcal{A}$, the map
$$\overline{(\varphi_{\iota})}_{\mathfrak U}(\prod_{i=1}^m
(x_{\iota}^i)_{\mathfrak
U_i}):=(\varphi_{\iota}(x_{\iota}^1,...,x_{\iota}^m))_{\mathfrak
U}=\lim_{\mathfrak{U}}\varphi_{\iota}(x_{\iota}^1,...,x_{\iota}^m)$$
defines an $m$-linear map $\prod_{i=1}^n
(E_{i,\iota}+F_{i,\iota})_{\mathfrak U_i}\to \mathbb{K}$ such that
$$\|L_{\overline{(\varphi_{\iota})}_{\mathfrak
U}}\|_{\mathcal{A}}\le\lim_{\mathfrak U}
\|L_{\varphi_{\iota}}\|_{\mathcal{A}}\le \lambda
\|L_{\varphi}\|_{\mathcal{A}}\,,$$ and hence
 $$L_{\overline{(\varphi_{\iota})}_{\mathfrak U}}\in
\mathcal{A}\left((E_{1,\iota}+F_{1,\iota})_{\mathfrak
U_1},...,(E_{n,\iota}+F_{n,\iota})_{\mathfrak
U_n}\right)=\left(\widetilde{\otimes}_{\alpha}^m
(E_{i,\iota}+F_{i,\iota})_{\mathfrak U_i}\right)^* \,.$$ The rest is to
convince us that we have the chain of embedding $\prod Y_i\iny \prod
X_i\iny \prod (E_{i,\iota}+F_{i,\iota})_{\mathfrak U_i} $. To this
end for a point $(y_i)_{i=1}^m\in \prod Y_i $ and
$\iota=\prod(E_i, F_i)$ we define\\

$$y_{i,\iota}:=\left\{
                 \begin{array}{ll}
                   j_i(y_i), & \hbox{if $y_i\in E_i$} \\
                   0, & \hbox{otherwise.}
                 \end{array}
               \right.
$$\\
It follows that $$J:\prod Y_i \iny
\prod(E_{i,\iota}+F_{i,\iota})_{\mathfrak U_i}$$
$$(y_i)\to (y_{1,\iota},...,y_{m,\iota})_{\iota}$$ is an
isometric embedding by the definition of the norm in the
ultraproduct. The same arguments for $\prod X_i$, defining
$$x_{i,\iota}:=\left\{
                 \begin{array}{ll}
                  x_i, & \hbox{if $x_i\in F_i$} \\
                   0, & \hbox{otherwise,}
                 \end{array}
               \right.
$$\\ allow us to get the corresponding isometric embedding $I: \prod X_i \iny
\prod(E_{i,\iota}+F_{i,\iota})_{\mathfrak U_i}$. It is routine to
check that $I\circ(j_i)_{i=1}^m=J$ and thus
$\widetilde{\varphi}:=\overline{(\varphi_{\iota})}_{\mathfrak
U}\circ I$ is the required extension of $\varphi$.
\end{proof}

The notion of locally $\alpha$-extension admits an obviously
symmetric form so it is also possible the ``polynomial version" of
Theorem \ref{rootns}. This ``polynomial version" in terms of
operator ideals works as:

\begin{theorem}\label{rootns1} Let  $\mathcal{Q}(\cdot)$ be an
ideal  of $m$-homogeneous polynomials
and consider an embedding $j:Y \hookrightarrow X$. If a given $q
\in \mathcal{Q}(Y)$ admits a symmetric local extension to $
\mathcal{Q}(X)$ then there exists an extension $\widetilde{q}\in
\mathcal{Q}^{max}(X)$.

\end{theorem}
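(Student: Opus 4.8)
\textbf{Proof proposal for Theorem \ref{rootns1}.}

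The plan is to mimic the argument of Theorem \ref{rootns}, passing through the ideal of multilinear maps associated to $\mathcal{Q}$ via polarization, and then using an ultraproduct construction to glue the local extensions into a single global one. First I would recall that a normed ideal $(\mathcal{Q},\|\cdot\|_{\mathcal{Q}})$ of $m$-homogeneous polynomials has an associated maximal envelope $\mathcal{Q}^{\max}$, and that by the definition of $\|\cdot\|_{\mathcal{Q}_{\max}}$ it suffices to produce, on $X$, an $m$-homogeneous polynomial $\widetilde{q}$ extending $q$ such that $\|\widetilde{q}|_M\|_{\mathcal{Q}}$ is bounded uniformly over $M\in\fin(X)$ by $\lambda\|q\|_{\mathcal{Q}}$ (for the relevant constant $\lambda$ coming from the local extension hypothesis). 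Thus the whole point is a uniform bound on restrictions to finite-dimensional subspaces, exactly as in the bilinear-form situation of \cite{cagajipro}.

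Next I would set up the index set: let $I$ be the directed set of pairs $(E,F)$ with $E\in\fin(Y)$, $F\in\fin(X)$, ordered by $(E,F)\le(E',F')$ iff $E\subseteq E'$ and $F\subseteq F'$, and fix an ultrafilter $\mathfrak{U}$ on $I$ dominating the order filter. For each $\iota=(E,F)$ the hypothesis that $q$ admits a symmetric local extension provides an $m$-homogeneous polynomial $q_\iota$ on $E+F$ extending $q|_E$ with $\|q_\iota\|_{\mathcal{Q}}\le\lambda\|q|_E\|_{\mathcal{Q}}\le\lambda\|q\|_{\mathcal{Q}}$, so $\sup_\iota\|q_\iota\|_{\mathcal{Q}}<\infty$. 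Here I would need the ultrastability of the ideal $\mathcal{Q}$ (or rather of the associated maximal ideal of symmetric multilinear forms) to form $\lim_{\mathfrak U}q_\iota$ as an $m$-homogeneous polynomial on the ultraproduct $(E_\iota+F_\iota)_{\mathfrak U}$ with $\|\lim_{\mathfrak U}q_\iota\|_{\mathcal{Q}}\le\lim_{\mathfrak U}\|q_\iota\|_{\mathcal{Q}}\le\lambda\|q\|_{\mathcal{Q}}$; this is precisely the ultrastability clause recorded in the preliminaries, and it is the analogue of the appeal to Theorem 3.4 of \cite{FH} in the proof of Theorem \ref{rootns}. Then, exactly as in that proof, I would build the isometric embeddings $J:Y\iny(E_\iota+F_\iota)_{\mathfrak U}$ (sending $y$ to the class of $y_\iota:=j(y)$ if $y\in E$, and $0$ otherwise) and $I:X\iny(E_\iota+F_\iota)_{\mathfrak U}$ (sending $x$ to the class of $x_\iota:=x$ if $x\in F$, and $0$ otherwise), check $I\circ j=J$, and define $\widetilde{q}:=(\lim_{\mathfrak U}q_\iota)\circ I$. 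Since $\mathcal{Q}$ is an ideal, $\widetilde{q}\in\mathcal{Q}(X)$ composed from a $\mathcal{Q}$-polynomial with the linear map $I$, with $\|\widetilde{q}\|_{\mathcal{Q}}\le\|I\|^m\,\|\lim_{\mathfrak U}q_\iota\|_{\mathcal{Q}}\le\lambda\|q\|_{\mathcal{Q}}$; and $\widetilde{q}\circ j=(\lim_{\mathfrak U}q_\iota)\circ J$ restricted appropriately recovers $q$, so $\widetilde{q}$ is the required extension, lying a fortiori in $\mathcal{Q}^{\max}(X)$.

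The main obstacle I expect is purely bookkeeping about what ``symmetric local extension'' means at the level of polynomials versus their associated symmetric multilinear forms: one must be careful that the local extensions $q_\iota$ are genuinely $m$-\emph{homogeneous} (not merely that their polarizations extend) and that the polarization constants do not blow up when passing to the ultraproduct — but since we work with a fixed $m$ and the polarization formula is a fixed finite combination, the constants are harmless and can be absorbed into $\lambda$. A secondary point is that ultrastability is stated in the preliminaries for $(\mathcal{Q},\|\cdot\|_{\mathcal{Q}})$ itself, so one should either assume $\mathcal{Q}$ ultrastable or, more cleanly, first pass to the maximal hull and invoke that maximal ideals of this type are automatically ultrastable (again in the spirit of \cite{FH}); since the conclusion only asserts $\widetilde q\in\mathcal{Q}^{\max}(X)$, working with $\mathcal{Q}^{\max}$ throughout is legitimate and removes any friction. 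Modulo these routine verifications, the argument is a faithful transcription of the proof of Theorem \ref{rootns} into the symmetric/polynomial language.
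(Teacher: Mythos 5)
Your proposal is correct and follows essentially the same route as the paper: the local extensions on the spaces $E+F$ are glued by an ultraproduct exactly as in Theorem \ref{rootns}, and the ultrastability issue you flag at the end is resolved in the paper precisely in the way you suggest, namely by working with the maximal hull --- concretely, by introducing the finitely generated symmetric tensor norm $\alpha_s$ with $\otimes_{\alpha_s}^m M := \mathcal{Q}(M)^*$ for finite dimensional $M$ and invoking the representation theorem of \cite{FH}, which yields $(\otimes_{\alpha_s}^m X)^* = \mathcal{Q}^{\max}(X)$ together with maximality and ultrastability. Apart from making that identification explicit (your main text momentarily assumes $\mathcal{Q}$ itself ultrastable, which is not hypothesized, but your closing remark repairs this and matches the paper's conclusion $\widetilde{q}\in\mathcal{Q}^{\max}(X)$), the argument coincides with the paper's.
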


\begin{proof} The main ingredient of the
proof is the representation theorem of \cite{FH}. Given the operator
ideal $\mathcal{Q}$ we define a tensor norm on finite-dimensional
spaces, as usual, by $\otimes_{\alpha_s}^{m}M:=\mathcal{Q}(M)^*$ and
extend to all normed spaces $Z$ by $$\|z\|_{\otimes_{\alpha_s}^m
Z}:=\inf \{\|z\|_{\otimes_{\alpha_s}^m M} |M\in \fin(Z),z\in
\otimes^{m}_sZ \}.$$  Now, for a given $q\in \mathcal{Q}(Y)$, we
restrict ourselves to the finite-dimensional subspaces of $Y$ and
$X$. This is, $\forall E\in \fin(Y),F\in \fin(X)$ we know that
$q\in
(\otimes_{\alpha_s}^{m}E)^*=\mathcal{Q}(E)^{**}=\mathcal{Q}(E)$
admits an extension to $\widehat{q} \in
(\otimes_{\alpha_s}^{m}(E+F))^*=\mathcal{Q}(E+F)^{**}=\mathcal{Q}(E+F)$
with $\sup \|\widehat{q}\|_{\mathcal{Q}}<\infty$. Similar arguments
as in the Theorem \ref{rootns}, see \cite{FH}, provide us a full
extension $\widetilde{q}\in (\otimes_{\alpha_s}^{m}X)^*$ and using
the representation theorem of \cite{FH} we have that
$(\otimes_{\alpha_s}^{m}X)^*=\mathcal Q^{max}(X)$.
\end{proof}

The preceding result admits also a formulation for multi-linear
operator ideals:

\begin{theorem}
 Let $\mathcal{A}$ be
an ideal of $m$-linear forms and consider embedding $j_i:Y_i
\hookrightarrow X_i$ for $i=1,...,m$. If a given $\varphi \in
\mathcal{A}(Y_1,\ldots,Y_m)$ admits a local extension to $
\mathcal{A}(X_1,\ldots, X_m)$ then there exists a full extension
$\widetilde{\varphi}\in \mathcal{A}^{max}(X_1,\ldots, X_m)$.
\end{theorem}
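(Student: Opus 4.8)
The plan is to mirror the proof of Theorem~\ref{rootns}, replacing the concrete ideal $(\widetilde{\otimes}_{\alpha}^m Y_i)^*$ by an abstract maximal ideal $\mathcal A$ of $m$-linear forms, and to use the ultraproduct stability that the representation machinery of \cite{FH} attaches to such ideals. First I would fix, for each $i\in\{1,\dots,m\}$, an ultrafilter $\mathfrak U_i$ dominating the order filter on the directed set of pairs $(E_i,F_i)$ with $E_i\in\fin(Y_i)$, $F_i\in\fin(X_i)$, and form the product ultrafilter $\mathfrak U=\prod_i\mathfrak U_i$ on $I=\prod_i\bigl(\fin(Y_i)\times\fin(X_i)\bigr)$. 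For each index $\iota=\bigl((E_i,F_i)\bigr)_i$, the hypothesis of local extension supplies an extension $\varphi_\iota\in\mathcal A\bigl(E_1+F_1,\dots,E_m+F_m\bigr)$ of $\varphi\circ i_E$ with $\|\varphi_\iota\|_{\mathcal A}\le\lambda\|\varphi\|_{\mathcal A}$, so in particular $\sup_\iota\|\varphi_\iota\|_{\mathcal A}<\infty$.

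Next I would invoke the ultrastability of maximal $m$-linear ideals (this is exactly the part of \cite[Thm. 3.4]{FH} used in the proof of Theorem~\ref{rootns}, now stated for a general maximal $\mathcal A$ rather than the specific $\alpha^*$-integral ideal) to conclude that the map
$$\overline{(\varphi_\iota)}_{\mathfrak U}\Bigl(\prod_{i=1}^m (x_\iota^i)_{\mathfrak U_i}\Bigr):=\lim_{\mathfrak U}\varphi_\iota(x_\iota^1,\dots,x_\iota^m)$$
is a well-defined $m$-linear form on $\prod_{i=1}^m(E_{i,\iota}+F_{i,\iota})_{\mathfrak U_i}$ lying in $\mathcal A$ with $\|\overline{(\varphi_\iota)}_{\mathfrak U}\|_{\mathcal A}\le\lim_{\mathfrak U}\|\varphi_\iota\|_{\mathcal A}\le\lambda\|\varphi\|_{\mathcal A}$. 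Then, exactly as in Theorem~\ref{rootns}, I would build the canonical isometric embeddings $J:\prod Y_i\iny\prod(E_{i,\iota}+F_{i,\iota})_{\mathfrak U_i}$ and $I:\prod X_i\iny\prod(E_{i,\iota}+F_{i,\iota})_{\mathfrak U_i}$ by sending a vector into the net that equals its image on those coordinates $\iota$ whose $\iota$-th finite-dimensional piece contains it and is $0$ elsewhere; the definition of the ultraproduct norm makes these isometric, and one checks $I\circ(j_i)_i=J$. Composing, $\widetilde\varphi:=\overline{(\varphi_\iota)}_{\mathfrak U}\circ I$ is an $m$-linear form on $\prod X_i$ extending $\varphi$. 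Finally, since $\mathcal A$ is maximal, the estimate on the restrictions $\|Q_L^{\mathbb K}\circ\widetilde\varphi_{|M_1\times\dots\times M_m}\|_{\mathcal A}$ (which are precisely some of the $\varphi_\iota$ up to restriction, hence bounded by $\lambda\|\varphi\|_{\mathcal A}$) forces $\widetilde\varphi\in\mathcal A^{\max}(X_1,\dots,X_m)$, as claimed.

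The only genuinely new point over Theorem~\ref{rootns}, and the step I expect to need the most care, is the assertion that an arbitrary maximal ideal of $m$-linear forms is ultrastable: in Theorem~\ref{rootns} this was quoted from \cite{FH} for the ideal dual to a finitely generated tensor norm, whereas here I am using the general representation theorem of \cite{FH} identifying a maximal ideal with $(\widetilde{\otimes}_\beta^m(\cdot))^*$ for the finitely generated tensor norm $\beta$ it generates, and then transporting ultrastability back. One must also be mildly careful that "local extension" in the abstract setting means extendability of each $\varphi\circ i_E$ with a uniform bound in the $\mathcal A$-norm (not merely the sup-norm), since it is exactly this uniform bound that feeds the ultraproduct estimate; with that reading the argument is a verbatim transcription of the previous two proofs, which is why the write-up can afford to be brief.
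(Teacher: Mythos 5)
Your proposal is correct and is essentially the argument the paper intends: this theorem is stated there without a printed proof, as the multilinear counterpart of Theorems \ref{rootns} and \ref{rootns1}, and the steps you give (the ultrafilter over pairs $(E_i,F_i)$, ultrastability obtained by passing to the maximal envelope $\mathcal{A}^{\max}$ identified via the representation theorem of \cite{FH} with $(\widetilde{\otimes}_{\beta}^m\cdot)^*$ for the generated finitely generated tensor norm $\beta$, and the canonical isometric embeddings $J$ and $I$ into the ultraproduct) are exactly those of the paper's proofs of the two preceding results. The two caveats you flag --- that ultrastability must be invoked for $\mathcal{A}^{\max}$ rather than for $\mathcal{A}$ itself (which need not be maximal), and that ``local extension'' must be read as uniform boundedness of the extensions in the $\mathcal{A}$-norm --- are precisely the points the paper leaves implicit, and with them your argument closes correctly since $\mathcal{A}$ and $\mathcal{A}^{\max}$ coincide isometrically on finite-dimensional spaces.
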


We define now local $\alpha$-complementation.

 \begin{definition}[Local $\alpha$-complementation]\label{def:alpha-comp}
Let $j:Y\hookrightarrow X$ be an embedding and $\alpha$ a tensor
norm. We say that $Y$ is locally $\alpha$-complemented in $X$
through $j$ if there exists a constant $\lambda$ such that for
every $F\in \fin(X)$, every operator $T:Y\to F^*$  can be extended
to $X$ by an operator $\widetilde{T}:X\to F^*$ with estimate
$\|\widetilde{T}\|_{\mathcal{A}}\leq \lambda \|T\|_{\mathcal{A}}$.
\end{definition}


One has:

\begin{itemize}
\item Every space is locally $\varepsilon$-complemented in any
superspace.

\item Local $\pi$-complementation is a weaker notion than
classical local complementation; i.e., local complementation
implies local $\pi$-complementation. Thus, every Banach space is
locally $\pi$-complemented in its bidual and every
$\mathcal{L}_{\infty}$-space is locally $\pi$-complemented in any
superspace. The converse does not hold: take an uncomplemented
subspace $Y$ of $L_p$, for $2<p<\infty$. Clearly, any $M\in
\fin(L_p)$ verifies that $C_2(M^*)\le T_2(L_p)$, see
\cite{dijato}. By Maurey's extension theorem we can extend any
operator $Y\rightarrow M^*$ to $L_p$ with the norm of the
extension controlled by a universal constant. Hence $Y$ is locally
$\pi$-complemented in $L_p$ but it cannot be locally complemented
because, being $L_p$ is reflexive, it would be complemented.

Moreover, if $X$ contains all finite dimensional Banach spaces,
e.g. $X=C[0,1]$, then $X$ is locally complemented in $E$ if and
only if $X$ is locally $\pi$-complemented in $E$.\\

\item Let $\omega_2$ be the tensor norm associated to the
$\gamma_2$ norm for finite rank operators defined as $$\gamma_2(T)
=\inf \|T_1\|\|T_2\|$$ where the $\inf$ is taken over all possible
factorizations $T = T_1T_2=T$ of $T$ through a Hilbert space. As a
consequence of Maurey's extension theorem one gets  that every
subspace $Y$ of a type 2 Banach space $X$ is locally
$\omega_2$-complemented in $X$ with constant at most $k_G
T_2(X)C_2(X^*)$, see \cite{dijato}.
\end{itemize}

\begin{definition} We say that every $\alpha^*$-integral bilinear form on $Y$, a subspace of $X$,
separately extends to $X$ if the following two conditions hold:
\begin{enumerate}
\item Every $\alpha^*$-integral bilinear form on $Y\times Y$
extends to an $\alpha^*$-integral bilinear form on $Y\times X$.
\item Every $\alpha^*$-integral bilinear form on $Y\times X$
extends to an $\alpha^*$-integral bilinear form on $X\times X$.
\end{enumerate}
\end{definition}

We are ready to connect the extension of bilinear forms with the
notion of local $\alpha$-complementation.

\begin{theorem}\label{rootns2}
Let $\alpha$ be a finitely generated tensor norm. If $Y$ is
locally $\alpha$-complemented in  $X$ then every
$\alpha^*$-integral bilinear form on $Y$ extends separately to
$X$.

\end{theorem}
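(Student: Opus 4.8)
The plan is to derive each of the two conditions in the definition of ``separate extension'' from the local-to-global principle of Theorem \ref{rootns}: once a form admits local $\alpha$-extensions of uniformly controlled norm it admits a full one, so the task reduces to manufacturing, out of local $\alpha$-complementation, the finite-dimensional extensions demanded by Definition \ref{alphaext}. First I would fix the dictionary. For $F$ finite dimensional, an $\alpha^{*}$-integral bilinear form $\beta$ on $A\times F$ is nothing but an operator $A\to F^{*}$ lying in the ideal $\mathcal A$, with $\|\beta\|_{\alpha^{*}}=\|L_{\beta}\|_{\mathcal A}$ its ideal norm (and, read off the other slot, an operator $F\to A^{*}$); under this identification Definition \ref{def:alpha-comp} asserts exactly that for some $\lambda$ and every $F\in\fin(X)$ each $\alpha^{*}$-integral bilinear form on $Y\times F$ extends to one on $X\times F$ with $\alpha^{*}$-norm multiplied by at most $\lambda$. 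I also use that restricting one variable of a bilinear form to a subspace never increases the $\alpha^{*}$-norm, because for a tensor norm the inclusion $M\otimes_{\alpha}N\to M\otimes_{\alpha}P$ with $N\subseteq P$ has norm at most $1$.

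For condition (1), let $\varphi$ be $\alpha^{*}$-integral on $Y\times Y$. To produce a local $\alpha$-extension to $Y\times X$ (Definition \ref{alphaext}, with embeddings $\mathrm{id}_{Y}$ and $j$), take $E_{1},E_{2}\in\fin(Y)$, $F_{1}\in\fin(Y)$, $F_{2}\in\fin(X)$ and set $M=E_{1}+F_{1}\in\fin(Y)\subseteq\fin(X)$. Viewed as an operator $Y\to M^{*}$ with domain the second slot, $\varphi|_{M\times Y}$ lies in $\mathcal A$ with norm at most $\|\varphi\|_{\alpha^{*}}$, so local $\alpha$-complementation extends it to an operator $X\to M^{*}$, that is, to an $\alpha^{*}$-integral bilinear form on $M\times X$ of norm at most $\lambda\|\varphi\|_{\alpha^{*}}$. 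Restricting this form to $(E_{1}+F_{1})\times(E_{2}+F_{2})$ gives an extension of $\varphi|_{E_{1}\times E_{2}}$ through $(\mathrm{id}_{Y},j)$ of norm at most $\lambda\|\varphi\|_{\alpha^{*}}$, uniformly in the data, and this is precisely the uniformly bounded family that the proof of Theorem \ref{rootns} requires, so it produces a full extension $\psi$ of $\varphi$ to $Y\times X$ with $\|\psi\|_{\alpha^{*}}\le\lambda\|\varphi\|_{\alpha^{*}}$.

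Condition (2) is the mirror construction on the first slot. Given $\psi$ $\alpha^{*}$-integral on $Y\times X$, take $E_{1}\in\fin(Y)$, $E_{2},F_{1},F_{2}\in\fin(X)$ and set $N=E_{2}+F_{2}\in\fin(X)$; viewed as an operator $Y\to N^{*}$ with domain the first slot, $\psi|_{Y\times N}$ has $\mathcal A$-norm at most $\|\psi\|_{\alpha^{*}}$, so it extends by local $\alpha$-complementation to an operator $X\to N^{*}$, that is, to an $\alpha^{*}$-integral form on $X\times N$ of norm at most $\lambda\|\psi\|_{\alpha^{*}}$, whose restriction to $(E_{1}+F_{1})\times(E_{2}+F_{2})$ extends $\psi|_{E_{1}\times E_{2}}$ through $(j,\mathrm{id}_{X})$. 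Theorem \ref{rootns} again delivers a full extension $\Theta$ of $\psi$ to $X\times X$ with $\|\Theta\|_{\alpha^{*}}\le\lambda\|\psi\|_{\alpha^{*}}$. Chaining (1) and (2), every $\alpha^{*}$-integral bilinear form on $Y\times Y$ in fact extends all the way to $X\times X$, with $\alpha^{*}$-norm multiplied by at most $\lambda^{2}$.

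The step needing the most care is the norm bookkeeping in this dictionary: Definition \ref{def:alpha-comp} only extends operators whose domain is the \emph{whole} space, which is why one restricts the complementary variable to a finite-dimensional subspace so that the remaining variable becomes that full domain, and then one must control the $\alpha^{*}$-norm along the restrictions back down to $(E_{i}+F_{i})$. In condition (1) the operator is read off the second slot, so one should verify this transposition costs nothing; that is automatic when $\alpha$ is symmetric, such as $\pi$, $\varepsilon$, $\omega_{2}$ or $\gamma_{2}$, and for general $\alpha$ it is absorbed by the transposed tensor norm. One must also note that Theorem \ref{rootns} is here applied with one of its two embeddings equal to an identity map, which is harmless.
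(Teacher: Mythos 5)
Your proposal is correct and follows the same route as the paper: restrict one variable to a finite-dimensional subspace $M$, read the form as an operator into $M^*$, extend it via local $\alpha$-complementation to obtain the uniformly bounded family of finite-dimensional extensions required by Definition \ref{alphaext}, and then invoke Theorem \ref{rootns} for each of the two slots in turn, arriving at the constant $\lambda^2$. Your version merely spells out the $(E_i,F_i)$ bookkeeping and the transposition caveat that the paper leaves implicit.
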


\begin{proof}
Given a subspace $M\in
\fin(Y)\subset \fin(X)$, we can construct the following
diagram

$$
\begin{CD}
 M\times Y @>>> M\times X \\
  @ViVV  @VViV \\
Y\times Y@>>> Y\times X\\
 @V\varphi VV \\
 \mathbb{K} \\
\end{CD}
$$

By hypothesis $\varphi|_{M\times Y}$ extends to $M\times X$ for
all $M\in \fin(Y)$ and it is therefore enough to apply Theorem
\ref{rootns}. That every bilinear form on $Y\times X$ extends to
$X\times X$ follows the same arguments. Notice that if $Y$ is
locally $\alpha$-complemented in $X$ with constant $\lambda$, then
we get $\lambda^2$ as extension constant for bilinear forms.\\
\end{proof}

\end{document}